\documentclass[12pt]{amsart}

\usepackage{amsmath,amssymb,amsthm}
\usepackage{cite}
\usepackage[margin=1in]{geometry}
\usepackage{subcaption}
\usepackage[dvipsnames]{xcolor}
\usepackage{longtable}
\usepackage{verbatim} 
\usepackage{ytableau}
\usepackage{booktabs}
\usepackage{tikz}
\usetikzlibrary{positioning}
\usepackage{cleveref}

\DeclareMathOperator{\col}{col}
\DeclareMathOperator{\colpos}{col\,pos}
\DeclareMathOperator{\dynp}{dp}
\DeclareMathOperator{\ind}{ind}
\DeclareMathOperator{\leftof}{left}

\DeclareMathOperator{\row}{row}
\DeclareMathOperator{\std}{std}
\DeclareMathOperator{\successor}{succ}

\DeclareMathOperator{\upof}{up}
\DeclareMathOperator{\pred}{pred}
\newcommand{\met}[1]{\langle #1\rangle}

\DeclareSymbolFont{sfoperators}{OT1}{cmss}{m}{n}
\DeclareSymbolFontAlphabet{\mathsf}{sfoperators}

\makeatletter
\def\operator@font{\mathgroup\symsfoperators}
\makeatother

\renewcommand{\c}[2]{ 
    \ifcase#1
    \or \textcolor{BrickRed}{#2}
    \or \textcolor{ForestGreen}{#2}
    \or \textcolor{RoyalBlue}{#2}
    \or \textcolor{Thistle}{#2}
    \or \textcolor{black}{#2}
    \or \textcolor{gray}{#2}
    \or \textcolor{Tan}{#2}
    \fi
}

\newcommand{\stack}[5]{
    \begin{tikzpicture}[scale=0.6]
        \draw[thick] (-0.5,0) -- (1,0) -- (1,-2) -- (2,-2) -- (2,0) -- (3.5,0);
        \node[fill = white, draw = white] at (2.8,.5) {$#5$};
        \node[fill = white, draw = white] at (1.5,-1.5) {$#4$};
        \node[fill = white, draw = white] at (1.5,-.8) {$#3$};
        \node[fill = white, draw = white] at (1.5,-.1) {$#2$};
        \node[fill = white, draw = white] at (.2,.5) {$#1$};
    \end{tikzpicture}
}
\newcommand{\arrow}{
    \!\!\!\!\!\!\!
    \begin{tikzpicture}[scale=0.6]
        \node at (0,0) {};
        \node at (0,0.7) {$\longrightarrow$};
    \end{tikzpicture}
    \!\!\!\!\!\!\!
}

\theoremstyle{definition}
\newtheorem{theorem}{Theorem}[section]
\newtheorem*{theorem*}{Theorem}
\newtheorem{example}[theorem]{Example}
\newtheorem*{example*}{Example}
\newtheorem{lemma}[theorem]{Lemma}
\newtheorem*{lemma*}{Lemma}
\newtheorem{corollary}[theorem]{Corollary}
\newtheorem*{corollary*}{Corollary}
\newtheorem{definition}[theorem]{Definition}
\newtheorem*{definition*}{Definition}

\newtheorem*{proposition*}{Proposition}
\newtheorem{remark}[theorem]{Remark}
\newtheorem*{remark*}{Remark}
\newtheorem{conjecture}[theorem]{Conjecture}

\title[Stack-sorting Diagrams and Tableaux]{Polynomial Time Enumeration of $t$-Stack-Sortable Permutations Ending in Their Least Entry}
\author{Jerry Zhang}\address{\textsc{J. Zhang}, South Pasadena High School, South Pasadena, CA, 91030} \email{jerrylezhang@gmail.com}
\begin{document}

\begin{abstract}
We study the behavior of West's stack-sorting map $s$ on permutations whose last entry is also their least. Let $S_{n}':=\{\pi0\mid \pi\in S_n\}$ where $\pi0$ denotes the concatenation of $\pi$ and $0$. For each permutation $\pi\in S_n'$, we introduce a new combinatorial object known as the \emph{stack-sorting tableau} $T_{\pi}$, which ultimately serves as the key ingredient in the first polynomial time algorithm for counting the number of $t$-stack-sortable permutations in $S_n'$. We then establish a precise relationship between the behavior of $s$ on $S_{n}'$ and on $S_{n}$. 
\end{abstract}
\maketitle
\section{Introduction}
In 1990, West \cite{west} introduced the stack-sorting map $s$ from the set of permutations to itself. For each permutation $\pi$, the map iteratively pushes entries of $\pi$ into a stack so long as either the stack is empty or the top element of the stack is greater than the next entry of $\pi$. If the next entry cannot be pushed into the stack, then the top element of the stack is popped out of the stack and appended to the output permutation. The process continues until all elements of $\pi$ are appended to the output permutation. Note that throughout the application of $s$ on $\pi$ the stack maintains a strictly increasing order from top to bottom. Further, the maximal unsorted entry of $\pi$ is always sent to its correct position when $s$ is applied to $\pi$, so $s^{n-1}(\pi)$ must be increasing. Let $\met{\pi}$ denote the minimum integer $k$ for which $s^k(\pi)$ is increasing.
\begin{figure}[h]
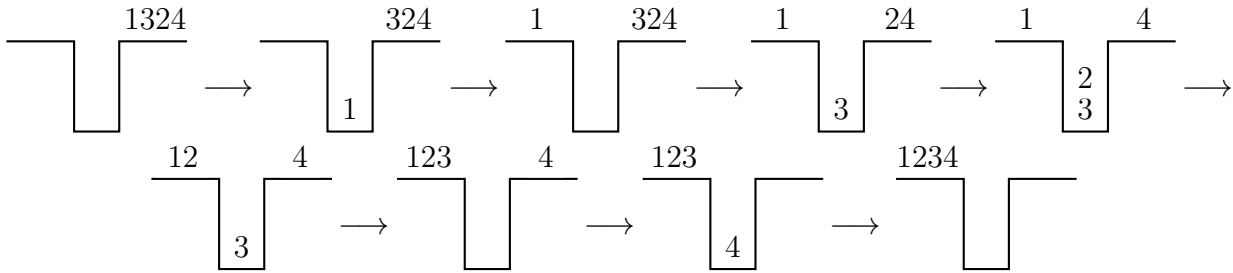

    \begin{center}
        \stack{}{}{}{}{1324}\arrow
        \stack{}{}{}{1}{324}\arrow
        \stack{1}{}{}{}{324}\arrow
        \stack{1}{}{}{3}{24}\arrow
        \stack{1}{}{2}{3}{4}\arrow
        \stack{12}{}{}{3}{4}\arrow
        \stack{123}{}{}{}{4}\arrow
        \stack{123}{}{}{4}{}\arrow
        \stack{1234}{}{}{}{}
    \end{center}
    \caption{West's stack-sorting map applied on $\pi = 1324$.}
    \label{fig:westmap}
\end{figure}

For all $t\in\mathbb{N}$, a permutation $\pi$ is \emph{$t$-stack-sortable} if and only if $\met{\pi}\le t$. The set of all $t$-stack-sortable permutations in $S_n$ is denoted by $\mathcal{W}_t(n)$. In 1968, Knuth showed that $|\mathcal{W}_1(n)|$ was given by the $n$th Catalan number $C_n$.
\begin{theorem}[Knuth \cite{knuth}]
For all $n\in\mathbb{N}$,
\[|\mathcal{W}_1(n)|=C_n=\frac{1}{n+1}\binom{2n}{n}.\]
\end{theorem}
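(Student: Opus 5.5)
The plan is to characterize $1$-stack-sortable permutations as those avoiding the pattern $231$, and then count $231$-avoiding permutations by a Catalan recursion. The key structural observation is already implicit in the description of $s$: the stack is always increasing from top to bottom, and an entry is popped only when the next input entry exceeds the top of the stack (or when the input is exhausted).

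\textbf{Step 1: $s$ has a recursive form.} Write $\pi = L\,n\,R$, where $n$ is the maximum entry. When $n$ arrives, everything on the stack is smaller, so all of it is popped before $n$ is pushed. After that, $n$ sits at the bottom of the stack until the very end, and the entries of $R$ are processed above it exactly as if the stack were empty. I will also argue that the entries of $L$ are processed exactly as if $L$ were the whole input. This gives the identity
\[s(L\,n\,R) = s(L)\,s(R)\,n.\]
By induction on length, $s(\pi)$ is the identity if and only if $s(L)$ and $s(R)$ are increasing and every entry of $L$ is smaller than every entry of $R$.

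\textbf{Step 2: characterize $1$-stack-sortable permutations.} I claim $s(\pi)$ is increasing if and only if $\pi$ avoids $231$. This follows by induction from Step 1, together with a check of the separation condition:
\begin{itemize}
\item If some $b \in L$ exceeds some $c \in R$, then $b, n, c$ is an occurrence of $231$.
\item Conversely, take an occurrence of $231$ in $\pi$. Either it lies entirely in $L$ or entirely in $R$, in which case induction applies; or it straddles $n$. In the straddling case its first two letters can be taken in $L$ (or the middle letter can be replaced by $n$), which forces some entry of $L$ to exceed some entry of $R$.
\end{itemize}

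\textbf{Step 3: enumerate.} Let $a_n$ denote the number of $231$-avoiders in $S_n$. Such a permutation with $n$ in position $k+1$ consists of $L$, an arbitrary $231$-avoider on $\{1,\dots,k\}$, followed by $n$, followed by $R$, an arbitrary $231$-avoider on $\{k+1,\dots,n-1\}$. Hence
\[a_n = \sum_{k=0}^{n-1} a_k\,a_{n-1-k}, \qquad a_0 = 1,\]
which is the Catalan recurrence. Solving it is standard: the generating function satisfies $A(x) = 1 + xA(x)^2$, so $A(x) = \frac{1-\sqrt{1-4x}}{2x}$. Extracting coefficients gives $\frac{1}{n+1}\binom{2n}{n}$, or one can cite the known solution of the recurrence.

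\textbf{Main obstacle.} The step needing the most care is the recursive identity in Step 1, specifically the claim that the suffix $R$ is sorted independently. The point is that $n$ is never popped before the input is exhausted, because no later entry exceeds it. So $n$ acts as an empty-stack floor for $R$, and the entries of $R$ are output, in the order $s(R)$, before $n$. Everything else is routine induction and the standard Catalan computation.
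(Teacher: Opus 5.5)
The paper does not prove this statement. It quotes Knuth's theorem as background, so there is no in-paper proof to compare with. Your proof is correct and is the standard argument.

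Its key step, the decomposition $s(L\,n\,R)=s(L)\,s(R)\,n$, is the same identity the paper invokes as ``the recursive definition of the stack-sorting map'' in the proof of \Cref{lem:gelastk}. Your justification of it is the right one: $n$ empties the stack when it arrives and then acts as a floor that is never popped while $R$ is processed.

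Two small remarks.

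\textbf{Step 2 is not needed for the count.} Step 1 already shows that $\pi$ is $1$-stack-sortable if and only if $L$ and $R$ are $1$-stack-sortable and every entry of $L$ is below every entry of $R$. Since $s$ depends only on relative order, this gives $a_n=\sum_{k=0}^{n-1}a_k a_{n-1-k}$ directly. The detour through $231$-avoidance gives you the classical pattern characterization, but nothing further for the enumeration.

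\textbf{The straddling case of Step 2 is worded loosely.} The first two letters of a straddling occurrence need not both lie in $L$. The clean statement is as follows. Take an occurrence $a\,b\,c$ with $c<a<b$. Neither $a$ nor $c$ can equal $n$. If $a\in R$ the whole occurrence lies in $R$, and if $c\in L$ it lies in $L$. So whenever the occurrence is not contained in one side, $a\in L$ and $c\in R$, and $a>c$ violates the separation condition wherever $b$ sits.
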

An explicit formula for $|\mathcal{W}_2(n)|$ was conjectured by West and proved by Zeilberger in 1992.
\begin{theorem}[Zeilberger \cite{zeilberger}]
For all $n\in\mathbb{N}$,
\[|\mathcal{W}_2(n)|=\frac{2}{(n+1)(2n+1)}\binom{3n}{n}.\]
\end{theorem}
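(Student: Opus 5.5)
The statement just above is Zeilberger's formula
\[|\mathcal{W}_2(n)|=\frac{2}{(n+1)(2n+1)}\binom{3n}{n}.\]
My plan is a decomposition, then a functional equation, then extraction of coefficients. I would follow Zeilberger's original strategy, refined in the style of later bijective and generating-function proofs.

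\textbf{Step 1: a recursive characterization.} Write $\pi=LnR$ with $n$ the maximal entry, so that $s(\pi)=s(L)\,s(R)\,n$. The permutation $\pi$ is $2$-stack-sortable exactly when $s(\pi)$ avoids the pattern $231$. Equivalently, $\pi$ avoids $2341$ and avoids every $3\bar{5}241$ occurrence, meaning a $3241$ that is not part of a $35241$ (West's characterization). I would work directly with the decomposition rather than with patterns. Apply $s$ a second time to $s(L)s(R)n$. The last entry $n$ is harmless, so the requirement is that $s(L)s(R)$ is $1$-stack-sortable. That means $s(L)$ and $s(R)$ are each $231$-avoiding, and no $231$ pattern straddles the two blocks. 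To control the straddling condition, I would track catalytic statistics of $\pi$: the number of entries of $s(L)$ exceeding a given threshold, or equivalently the lengths of the left-to-right-maxima structure.

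\textbf{Step 2: the functional equation.} Introduce a generating function $F(x;y,z)$ counting $2$-stack-sortable permutations by length, with one or two catalytic variables recording these boundary statistics. Zeilberger used a refinement by the position of $n$ and the number of entries to its left. Translating the decomposition gives a polynomial equation with divided differences in the catalytic variables, of the form
\[F(y)=1+xy\,\frac{yF(y)-F(1)}{y-1}\cdot(\text{similar factor}).\]
Here $y$ is a catalytic variable, $F(y)$ is shorthand for $F(x;y)$, and $F(1)$ is the full enumerating series.

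\textbf{Step 3: solving.} I would solve the equation by the kernel method, or by Brown's quadratic method, or by Bousquet-Mélou and Jehanne's general theorem for polynomial equations with one catalytic variable. The result is that $A(x)=\sum_n|\mathcal{W}_2(n)|x^n$ is algebraic, parametrized by $x=t(1-t)^2$ with $A$ rational in $t$. Lagrange inversion then gives the coefficient $\frac{2}{(n+1)(2n+1)}\binom{3n}{n}$, which is the count of rooted nonseparable planar maps with $n+1$ edges. As an alternative to Steps 2 and 3, I could cite or rebuild the bijection of Goulden and West, or of Cori, Jacquard and Schaeffer, between $2$-stack-sortable permutations and nonseparable maps, and then apply Tutte's count.

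\textbf{Main obstacle.} The hard step is Step 1. The straddling $231$ condition across $s(L)$ and $s(R)$ must be encoded by finitely many statistics that recur cleanly under the decomposition. The first thing I would try is the pair (number of left-to-right maxima, number of descents-type boundary entries), checked on small $n$ against the sequence $1,2,6,22,91$.
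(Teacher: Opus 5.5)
The paper gives no proof of this theorem; it quotes it from Zeilberger as background, so your proposal has to stand on its own. As written it has a genuine gap. Steps 1 and 2 stop exactly where the work begins. You reduce $2$-stack-sortability of $\pi=LnR$ to ``$s(L)$, $s(R)$ avoid $231$ and no $231$ straddles them,'' but you never determine what the straddling condition is and never choose a catalytic statistic. The displayed equation with ``(similar factor)'' is a placeholder, not something derived. Your suggested pair (left-to-right maxima, descents) is a guess, with no argument that it controls the straddling condition or recurs under the decomposition. Also, ``the position of $n$'' and ``the number of entries to its left'' are the same statistic. Step 3 cannot be run on an unspecified equation. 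The fallback of quoting Goulden--West or Cori--Jacquard--Schaeffer plus Tutte is a citation, not a proof. Your Step 3 target is correct, though: with $x=t(1-t)^2$ one has $\sum_{n\ge0}|\mathcal{W}_2(n)|x^n=(1-t-t^2)/(1-t)^2$, and Lagrange inversion gives the formula.

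The missing idea is an explicit analysis of the straddling condition. Since $s(L)$ ends in $\max L$, a $231$ with its `$2$' and `$3$' in $s(L)$ exists iff at least two entries of $L$ exceed $\min R$. Next, $c$ precedes some $a<c$ in $s(R)$ iff $R$ contains a subsequence $c\,z\,a$ with $z>c$. So, once the first kind is excluded, a $231$ with only its `$2$' in $s(L)$ forces that entry to be $\ell=\max L$. Hence $\pi=LnR$ is $2$-stack-sortable iff $L$ and $R$ are, and either $\max L<\min R$, or $\ell$ is the only entry of $L$ above $\min R$ and $R$ has no subsequence $c\,z\,a$ with $a<\ell<c<z$.

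This dictates the catalytic parameter. Let $g(\sigma)$ count the gaps $v$ of $\sigma$ lying above $\min\sigma$ (between consecutive values, or above $\max\sigma$) for which $\sigma$ has no $c\,z\,a$ with $a<v<c<z$, and set $g(\emptyset)=0$. One checks that $g(\pi)=g(L)+g(R)+1$ in the first case. In the second case, if $\ell$ occupies the $j$th admissible gap of $R$ counted from the top, then $g(\pi)=g(L)+j$. So $F(u)=\sum_{\pi}x^{|\pi|}u^{g(\pi)}$ satisfies
\[F(u)=1+xuF(u)^2+xu\,\bigl(F(u)-1\bigr)\frac{F(u)-F(1)}{u-1},\]
which reproduces $1,2,6,22,91,408$. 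Brown's quadratic method then applies. With $x=t(1-t)^2$ and $F(1)=(1-t-t^2)/(1-t)^2$, the discriminant in $F(u)$ factors as $\bigl(1-(1-t^2)u\bigr)^2\bigl(1-4t(1-t)u\bigr)$, with double root $u=1/(1-t^2)$. Proving the two formulas for $g(\pi)$ is the real content your Step 1 is missing.
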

In 2020, Defant provided the first polynomial time algorithm for computing $|\mathcal{W}_3(n)|$.
\begin{theorem}[Defant \cite{defant}]
For all $n\in\mathbb{N}$, the value $|\mathcal{W}_3(n)|$ is computable in polynomial time.
\end{theorem}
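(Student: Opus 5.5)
The plan is to reduce the count to a dynamic program on a polynomial number of states. The starting point is the decomposition of West's map at the maximal entry. If $\pi = LnR$ with $n$ the largest entry, then $s(\pi) = s(L)\,s(R)\,n$. Hence $\pi \in \mathcal{W}_3(n)$ if and only if $s(L)s(R)$, viewed as a permutation of its entries, is $2$-stack-sortable. Equivalently,
\[|\mathcal{W}_3(n)| = \sum_{\sigma\in\mathcal{W}_2(n)} |s^{-1}(\sigma)|.\]
Summing over $\mathcal{W}_2(n)$ directly is exponential. So I would instead build the preimage and the two further sorts simultaneously, recursively in the position of the maximum.

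First, I would describe $s^{-1}(\sigma)$ combinatorially. Preimages of $\sigma$ correspond to ways of nesting entries into a stack history, for instance via a decreasing binary tree (postorder reading of $\sigma$, in-order reading of $\pi$). The fertility $|s^{-1}(\sigma)|$ then decomposes over the descents of $\sigma$. Next, I would run the same max-decomposition on $s(\pi)$ and on $s^2(\pi)$. The aim is to show that whether $s^3(\pi)$ is increasing depends on the two halves $L$ and $R$ only through a bounded list of integer statistics. Natural candidates are:
\begin{itemize}
\item the length of each block;
\item the relative position of certain left-to-right maxima in $s(L)$ and $s^2(L)$;
\item the number of entries of $R$ that are larger than a given threshold coming from $L$;
\item the length of the final increasing run that the next sort will pull forward.
\end{itemize}
Each statistic takes $O(n)$ values.

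With such a closed list of statistics $(a_1,\dots,a_k)$, I would define a refined count $W(m; a_1,\dots,a_k)$: the number of length-$m$ blocks with those statistics that are compatible with the final condition. I would then derive a recurrence that splits a block at its maximum and combines the statistics of the two sub-blocks by explicit piecewise-linear rules. The number of states is $O(n^{k+1})$, and each transition sums over $O(n^{c})$ choices, so $|\mathcal{W}_3(n)|$ is computed in polynomial time.

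The main obstacle is the closure step: proving that a fixed finite set of statistics suffices. The decomposition at $n$ is clean for one sort. After $s$, however, the maximum of $s(L)s(R)$ other than $n$ may lie in either block, and its left and right pieces straddle the $L/R$ boundary. I would first handle this by splitting into cases by which block contains the second largest entry and the third largest entry. I would then show inductively that each case needs only the count of entries on each side of the current threshold together with one "last-run" parameter, and verify that the recurrence respects this invariant.
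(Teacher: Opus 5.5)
The paper does not prove this statement; it is quoted from Defant. So there is no in-paper argument to compare against, and your proposal has to stand on its own. It does not, because of a genuine gap. Everything that makes the theorem nontrivial sits in the closure step you flag as the main obstacle. You offer candidate statistics, not a proof that some fixed finite list of $O(n)$-valued parameters of $L$ and $R$ decides whether $s(L)s(R)$ is $2$-stack-sortable and is preserved by a recurrence.

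Your opening reduction is correct: $\pi=LnR\in\mathcal{W}_3(n)$ iff $s(L)s(R)$ is $2$-stack-sortable, equivalently $|\mathcal{W}_3(n)|=\sum_{\sigma\in\mathcal{W}_2(n)}|s^{-1}(\sigma)|$. But splitting at the maximum only yields independent subproblems when the value sets of $L$ and $R$ separate, as they do for $t=1$. For $t=3$ they do not. Take $L$ increasing of length $3$ and $|R|=1$. Then $L5R$ is $3$-stack-sortable exactly when $R$ is not below all of $L$: $23451\notin\mathcal{W}_3(5)$, but $13452\in\mathcal{W}_3(5)$. So your recurrence must also range over the interleavings of the value sets of $L$ and $R$, and nothing shows that the number of admissible interleavings depends on boundedly many parameters.

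Worse, the pieces multiply with each sort.
\begin{itemize}
\item If $R=R_1rR_2$ with $r=\max R>\max L$, then $s^2(\pi)=s\bigl(s(L)s(R_1)s(R_2)\bigr)\,r\,n$. You now need a concatenation of three separately sorted blocks with interleaved values to be $2$-stack-sortable, and splitting that at its maximum produces concatenations of still more blocks.
\item If instead $\ell=\max L>\max R$ with $L=L_1\ell L_2$, you need $s\bigl(s(L_1)s(L_2)\bigr)\,s^2(R)$ to avoid $231$. This again couples the blocks through their values.
\end{itemize}
So your case split on the second and third largest entries does not close up into a bounded invariant, and the claim that one ``last-run'' parameter suffices is unsupported. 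The remark that $|s^{-1}(\sigma)|$ ``decomposes over the descents'' is likewise not a usable formula. It is also never connected to the direct recursion you then propose.

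What is missing is a structural description that makes the count decompose. Defant supplies one through his Fertility Formula, which expresses $|s^{-1}(\sigma)|$ via valid hook configurations. He combines it with a Decomposition Lemma for counting preimages under $s$ of suitably structured sets of permutations, which yields a recurrence for $|\mathcal{W}_3(n)|$ with polynomially many terms.

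The finite-state closure you are hoping for is exactly what the present paper obtains, but only on $S_n'$. There the stack-sorting tableau records which entries cross $0$ at each sort, and the number of permutations with a given tableau is a product of hook lengths. The states $(m,w,w_{\ell},h,p)$ then form a bounded list of $O(n)$-valued parameters. The paper explicitly notes that this machinery has no natural generalization to $S_n$, so it cannot be borrowed to fill your gap either.
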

For $t\ge 4$, the enumeration of $|\mathcal{W}_t(n)|$ in polynomial time remains an open problem. Our main result is \Cref{thm:result}, which concerns the enumeration of $t$-stack-sortable permutations ending in their least entry. The set of all $t$-stack-sortable permutations in $S_n'$ is denoted by $\mathcal{W}'_t(n)$.
\begin{theorem}\label{thm:result}
Fix $t\in\mathbb{N}$. For all $n\in\mathbb{N}$, the value $|\mathcal{W}'_t(n)|$ is computable in $O(n^{3t+1})$ time.
\end{theorem}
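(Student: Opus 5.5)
Since $0$ is the least entry and sits at the end of $\pi0$, it enters the stack only after every entry of $\pi$ has been pushed, and at that moment the stack is emptied. Hence $s(\pi0)=s(\pi)0$ for the stack-sorting map, where the popped suffix is placed before $0$. We cannot stop there, though: $s(\pi)0$ ends in $0$, so the whole question is how many applications it takes to move $0$ to the front while the rest becomes sorted. Tracking $0$, one pass of $s$ moves $0$ leftward past the maximal run of entries that remain in the stack when it arrives. Concretely, $s(\sigma 0)$ outputs $0$ right after the last entry of $\sigma$ that is not part of the final stack contents, and the final stack is the sequence of right-to-left minima of $\sigma$ read from the end.

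My plan is to record, for each iterate $s^k(\pi0)$, a bounded amount of data in an array that plays the role of the stack-sorting tableau $T_\pi$. Row $k$ of this array lists the entries that form the "stack suffix" at step $k$. Then $\met{\pi0}\le t$ iff $0$ reaches position one and the prefix is sorted within $t$ steps, which I aim to express as a condition on a tableau with at most $t$ rows.

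The key steps, in order, are as follows.

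First, describe $s^k(\pi0)$ as $\sigma_k 0 \tau_k$, where $\tau_k$ is increasing, because entries passed over by $0$ were popped from an increasing stack. Derive a recursion: the new suffix $\tau_{k+1}$ is obtained from the right-to-left minima of $\sigma_k$ interleaved with $\tau_k$ after one more pass.

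Second, show that the evolution of the configuration is determined by a small number of "boundary" parameters per step. These are, for each $k\le t$, the position and value thresholds where the suffix begins. This makes it possible to count permutations by a left-to-right (or largest-value-first) insertion process whose state consists of $O(t)$ integers in $[0,n]$, namely three per level: a length, a current minimum value, and a position offset.

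Third, set up a dynamic program over $n$ insertion steps. The state space has size $O(n^{3t})$, and each transition costs $O(1)$ amortized, giving $O(n^{3t+1})$.

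The main obstacle is the second step: proving that the iterated behavior (the content of $s^{k}$ for $k\ge 2$) depends only on these $3t$ statistics and not on finer structure. This is exactly where the stack-sorting tableau must be designed carefully. I would first try to define $T_\pi$ row by row, with row $k$ equal to the entries that leave the stack together with $0$ at pass $k$. I would then prove a "column insertion" lemma: inserting a new largest entry $n$ into $\pi$ modifies $T_\pi$ in a way governed only by each row's length and its relative position to row $k-1$. With such a lemma, $|\mathcal{W}'_t(n)|$ is the number of valid tableaux with at most $t$ nonempty rows, which the dynamic program counts.
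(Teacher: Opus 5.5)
There is a genuine gap, and it is exactly the step you flag as the ``main obstacle.'' Step 2 is essentially the whole theorem. The proposal gives no mechanism for it, only a guess of three statistics per level chosen so the arithmetic lands on $O(n^{3t+1})$.

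Your preliminary analysis is also wrong. When $0$ arrives, the top of the stack exceeds $0$, so $0$ is pushed and nothing is popped; in the final flush $0$ leaves first. Hence $s(\sigma0)$ is the non-right-to-left-maxima of $\sigma$ in pop order, then $0$, then the right-to-left \emph{maxima} (not minima) of $\sigma$ in increasing order. For instance, $9\,3\,10\,7\,8\,2\,6\,1\,4\,5\,0\mapsto 3\,9\,7\,2\,1\,4\,0\,5\,6\,8\,10$. So $s(\pi0)\neq s(\pi)0$; your own later sentences, which have $0$ moving left each pass, already contradict that claim.

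Your closing claim also fails. Recording which entries cross $0$ at each pass is not injective: $1\,2\,3\,0$ and $2\,1\,3\,0$ both give pass sets $\{3\},\{2\},\{1\}$. So $|\mathcal{W}'_t(n)|$ is a \emph{weighted} count of such tableaux, not a plain count.

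The missing ideas are these.

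\emph{Block lemma.} For $\pi\in S_n'$, write the part of $s^{i-1}(\pi)$ left of $0$ as $b_1c_1b_2c_2\cdots b_rc_r$, where $c_1,\dots,c_r$ are its right-to-left maxima. After the next pass, the part left of $0$ is $s(b_1)s(b_2)\cdots s(b_r)$. Hence the entries crossing $0$ at pass $i+1$ are block maxima, each attached to a unique $c_j$ from pass $i$. Iterating this attachment puts every entry in a row indexed by a first-pass maximum and a column indexed by its pass. This gives a linear extension $T_\pi$ of a composition diagram $D(\alpha_\pi)$ whose width equals $\met{\pi}$.

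\emph{Fiber formula.} One has $|\mathcal{P}(\{(\alpha,T)\})|=\prod_{(i,j)\in D(\alpha)}h_\alpha(i,j)$. It comes from the characterization that $T=T_\pi$ exactly when the first column is a subsequence of $\pi$ and every other entry $T^{-1}(i,j)$ lies in its generalized block $B_{\pi,T}(i,j,1)$, whose size is the hook length. In the example above, $\alpha=(3)$ and the product $1\cdot 1\cdot 2$ accounts for the two permutations.

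\emph{Dynamic program.} Only this product structure makes a polynomial dynamic program possible. One appends rows of length at most $t$ to $\alpha$, tracking the hook lengths of the last row and the positions of each column's minimal entry. This gives $O(n^{2t+1})$ states with $O(n^t)$ successors each.

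By contrast, your proposed recursion of inserting a new largest entry is not local. It demotes every right-to-left maximum to its left and merges their blocks into a single first-pass block. So there is no evident reason its effect would be captured by $O(t)$ statistics.
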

To justify the relevance of studying stack-sorting on $S_n'$, we then establish a relationship in \Cref{thm:extensiontoSn} between $\met{\pi}$ and $\met{\pi^i}$ for all $\pi\in S_n$ and $i\in[n]$ where $\pi^i$ denotes the permutation obtained by taking the first $i$ entries of $\pi$ and removing any entries less than the $i$th entry of $\pi$. Note that the filtering process leaves $\pi^i$ as a permutation whose last entry is its least.
\begin{theorem}\label{thm:extensiontoSn}
For all $\pi\in S_n$, \[\met{\pi} = \max_{i\in [n]}(\met{\pi^i}).\]
\end{theorem}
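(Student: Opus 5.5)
The plan is to index $\pi^i$ by the value $b=\pi_i$ and to prove that the operation $\pi\mapsto\pi^i$ commutes with $s$. Concretely, for a permutation $\sigma$ and an entry $b$ of $\sigma$, let $\sigma^{(b)}$ be the word obtained by taking the prefix of $\sigma$ ending at $b$ and deleting the entries smaller than $b$, so that $\pi^i=\pi^{(\pi_i)}$. The key lemma I aim for is
\[
s(\sigma)^{(b)} = s\bigl(\sigma^{(b)}\bigr),
\]
where the right side is computed on the word with entries compared by value (equivalently, after standardizing). By induction on $k$ this gives $s^k(\pi)^{(b)}=s^k(\pi^{(b)})$ for every $k$.

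The lemma rests on two observations about a single pass of the stack.

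First, restriction to large values commutes with $s$. Fix $b$ and look only at entries $\ge b$. An entry $a\ge b$ is popped exactly when the next input entry exceeds the current top of the stack. Any entry $c<b$ that is pushed on top of $a$ is popped before $a$ is. Moreover, whether $a$ gets popped is triggered only by arrivals of entries larger than the current top, and since the stack increases from top to bottom, popping down to $a$ requires an arrival larger than $a$. Hence the relative output order of the entries $\ge b$ depends only on the subsequence of entries $\ge b$. I will make this precise by showing that the stack contents restricted to values $\ge b$, together with the output restricted to values $\ge b$, evolve identically for $\sigma$ and for its restriction to values $\ge b$.

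Second, the prefix truncation is compatible with $s$. Every entry larger than $b$ that appears after $b$ in $\sigma$ is output after $b$ in $s(\sigma)$. Indeed, when the first larger entry after $b$ arrives, $b$ (or something below it in the stack) forces $b$ to be popped before that entry is pushed. Conversely, entries $>b$ that precede $b$ in $\sigma$ stay among the entries that precede $b$ in $s(\sigma)$, or are output after it. In either case they are handled correctly by the restricted computation, since the restricted word ends at $b$ and $s$ of the restricted word outputs its last entry (its minimum) at a time consistent with the restriction. Combining the two observations gives the lemma.

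With the lemma in hand, the theorem follows directly. A permutation $\tau$ is increasing if and only if, for every entry $b$, no entry larger than $b$ precedes $b$, that is, $\tau^{(b)}$ consists of $b$ alone. A word ending in its least entry is increasing after $k$ passes exactly when it sorts, and for $\pi^{(b)}$ this means $s^k(\pi^{(b)})$ is the single letter $b$ preceded by nothing larger. So
\[
s^k(\pi)\text{ is increasing}\iff s^k(\pi^{(b)})\text{ is increasing for all } b\iff k\ge \max_i\met{\pi^i},
\]
where the second equivalence uses that each $\met{\cdot}$ is a threshold, because once a permutation is sorted it stays sorted.

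I expect the main obstacle to be the careful verification of the first observation, namely that entries smaller than $b$ never alter the pop timing of larger entries. The simplest route is an induction on the steps of the stack algorithm with an explicit invariant relating the two runs.
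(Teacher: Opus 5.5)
Your key lemma $s(\sigma)^{(b)}=s(\sigma^{(b)})$ is false, and so is its iterate $s^k(\pi)^{(b)}=s^k(\pi^{(b)})$.

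Take $\sigma=21$ and $b=1$. Then $\sigma^{(1)}=21$ and $s(\sigma^{(1)})=12$. But $s(\sigma)=12$, so $s(\sigma)^{(1)}=1$.

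The error is in the case your second observation waves through (``or are output after it''). The word $\sigma^{(b)}$ ends in its minimum $b$. So when $s$ runs on $\sigma^{(b)}$, the letter $b$ is pushed, the input is exhausted, and $b$ is popped \emph{followed by} everything still in the stack. Those entries exceed $b$ and also come after $b$ in $s(\sigma)$. The truncation in $s(\sigma)^{(b)}$ deletes them, while $s(\sigma^{(b)})$ keeps them.

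The correct statement is $s(\sigma)^{(b)}=\bigl(s(\sigma^{(b)})\bigr)^{(b)}$. It does follow from your first observation: the runs of $s$ on $\sigma_{\ge b}$ and on $\sigma^{(b)}$ coincide until $b$ is popped, and in both runs that happens immediately after $b$ is pushed. This iterates to $s^k(\pi)^{(b)}=\bigl(s^k(\pi^{(b)})\bigr)^{(b)}$.

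Your final paragraph is in fact inconsistent with the lemma as you stated it. If $s^k(\pi)$ is sorted, then $s^k(\pi)^{(b)}=b$. Your identity would turn this into $s^k(\pi^{(b)})=b$, which is impossible once $|\pi^{(b)}|>1$.

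With the corrected identity, what you actually obtain is this: $s^k(\pi)$ is increasing if and only if, for every $b$, the letter $b$ comes first in $s^k(\pi^{(b)})$. To finish, you still need that this forces $s^k(\pi^{(b)})$ to be increasing, and that is not automatic. You can get it in either of two ways:
\begin{itemize}
\item from the paper's \Cref{lem:increasingtail}, which says the letters to the right of the original last entry remain an increasing run of larger letters;
\item within your own framework, by applying the corrected identity to the word $\pi^{(b)}$ and using $(\pi^{(b)})^{(c)}=\pi^{(c)}$ for every entry $c$ of $\pi^{(b)}$.
\end{itemize}

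Once repaired this way, your argument is a legitimate alternative to the paper's. The paper instead inducts on $n$ through $\met{\pi}=\max(\met{\pi[:n]},\met{\pi_{\ge\pi_n}})$. It combines \Cref{lem:increasingtail}, \Cref{lem:excludinglast} (deleting the last entry commutes with $s^k$) and \Cref{lem:gelastk} (your first observation). Your version treats all values $b$ at once rather than peeling off one entry at a time.
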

\begin{example}
Consider the permutation $\pi=213$. \Cref{thm:extensiontoSn} yields
\[\met{213} = \max(\met{2},\met{21},\met{3}).\]
\end{example}
\section{Preliminaries}
For every permutation $\pi$, we introduce the following notation.
\begin{itemize}
\item The $i$th element of $\pi$ is denoted by $\pi_i$.
\item The size of $\pi$ is denoted by $|\pi|$.
\item For all integers $1\le i<j\le |\pi|+1$, the subpermutation $\pi_i\pi_{i+1}\dots\pi_{j-1}$ is denoted by $\pi[i:j]$. Note that the right endpoint $\pi_j$ is excluded, which mirrors programming slice syntax. We also use the shorthand $\pi[i:]=\pi[i:n+1]$ and $\pi[:j]=\pi[1:j]$.
\item If $i\in\pi$, then the index of $i$ in $\pi$ is denoted by $\ind_{\pi}(i)$.
\item The greatest element of $\pi$ is denoted by $\max(\pi)$.
\item The \emph{standardization} of $\pi$, which is obtained by replacing the $i$th least entry of $\pi$ with $i-1$ for all $1\le i\le |\pi|$, is denoted by $\std(\pi)$. Note that $\std(\pi)\in S_{|\pi|-1}'$.
\item The subsequence of $\pi$ consisting of all elements greater than or equal to $i$ is denoted by $\pi_{\ge i}$.
\end{itemize}
\Cref{thm:extensiontoSn} can now be reformulated under the new notation in \Cref{thm:restated}.
\begin{theorem}[\Cref{thm:extensiontoSn} rephrased]\label{thm:restated}
For all $\pi\in S_n$, \[\met{\pi} = \max_{i\in [n]}(\met{\pi[:i+1]_{\ge\pi_i}}).\]
\end{theorem}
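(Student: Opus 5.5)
The plan is to reduce the theorem to a \emph{restriction lemma}: $s$ commutes with restricting a permutation to a suitably closed set of values. Call a set $A$ of entries of a permutation $\sigma$ \emph{closed in $\sigma$} if, whenever $a,b\in A$ with $b$ appearing before $a$ and $a<b$, every entry $c>b$ lying between $b$ and $a$ in $\sigma$ also lies in $A$. Write $\sigma|_A$ for the subsequence of $\sigma$ on $A$.

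First, record the standard description of $s$ on pairs. For entries $a<b$ with $b$ before $a$ in $\sigma$, $b$ precedes $a$ in $s(\sigma)$ if and only if some entry $c>b$ lies strictly between $b$ and $a$ in $\sigma$. Otherwise $b$ is still on the stack when $a$ is pushed, so $a$ is popped first. Pairs with $a<b$ and $a$ before $b$ always stay in order. Hence the relative order in $s(\sigma)$ of two entries of $A$ depends only on entries of $\sigma$ between them that are larger than the larger of the two. If $A$ is closed, those entries lie in $A$, and so $s(\sigma)|_A=s(\sigma|_A)$.

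Next, I would check that $A_i=\{\pi_j : j\le i,\ \pi_j\ge \pi_i\}$ is closed in $\pi$. Take $a,b\in A_i$ with $b$ before $a$, and $c>b$ between them. Then $c$ occurs at a position before that of $a$, which is at most $i$, and $c>b\ge\pi_i$, so $c\in A_i$. Since $\pi|_{A_i}=\pi^i$, the lemma gives $s(\pi)|_{A_i}=s(\pi^i)$.

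To iterate, I need that closedness is preserved: if $A$ is closed in $\sigma$, then $A$ is closed in $s(\sigma)$. This is the step I expect to be the main obstacle. I would prove it by contradiction with a case analysis. Suppose $a<b$ are in $A$ with $b$ before $a$ in $s(\sigma)$, and $c\notin A$ with $c>b$ lies between them in $s(\sigma)$. By the pair rule, $b$ before $a$ in $s(\sigma)$ means that in $\sigma$ there is an entry $d>b$ between $b$ and $a$, and by closedness $d\in A$. Now compare $c$ with $b$ and $a$ using the pair rule. Since $c>b>a$, the fact that $c$ precedes $a$ in $s(\sigma)$ forces $c$ to precede $a$ in $\sigma$ with a larger entry between them. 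The fact that $b$ precedes $c$ in $s(\sigma)$ forces $b$ to precede $c$ in $\sigma$, because if $c$ came first, $c>b$ would keep $c$ ahead of $b$ only with a larger entry between, and otherwise $b$ would be output first. From these constraints and the witness $d\in A$, I would derive that $c$ sits between two elements of $A$ in a configuration where closedness of $A$ in $\sigma$ forces $c\in A$, a contradiction. If this fails in some case, the fallback is to prove preservation only for sets of the specific shape arising from $A_i$ (a prefix intersected with an up-set), tracking how $s$ moves them.

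Granting preservation, induction gives $s^k(\pi)|_{A_i}=s^k(\pi^i)$ for all $k$. If $s^k(\pi)$ is increasing, so is every $s^k(\pi^i)$, which gives $\met{\pi}\ge\max_i\met{\pi^i}$. For the reverse inequality, suppose $s^k(\pi)$ is not increasing, so some pair $a<b$ is out of order in it. Let $i$ be the larger of the two original indices of $a$ and $b$ in $\pi$ (the one of $a$ if $a$ comes later, else that of $b$). I would then argue that $\{a,b\}\subseteq A_j$ for an appropriate $j$, namely the position of $a$ if $a$ occurs after $b$ in $\pi$. In the other case, $a$ is before $b$ in $\pi$, but $s$ never inverts an increasing pair, so the pair $a,b$ being out of order in $s^k(\pi)$ forces $b$ before $a$ in $\pi$. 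Then the pair is out of order in $s^k(\pi^j)$, so $\met{\pi^j}>k$, which gives $\met{\pi}\le\max_i\met{\pi^i}$.
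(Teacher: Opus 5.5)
\textbf{There is a genuine gap.} Most of your argument is correct: the pair rule, the identity $s(\sigma)|_A=s(\sigma|_A)$ for $A$ closed in $\sigma$, the closedness of $A_i$ in $\pi$, and the two final inequalities. The step you flagged, however, is false as stated: closedness is \emph{not} preserved by $s$ in general.

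Take $\sigma=3241$ and $A=\{1,2,4\}$. The inverted pairs of $A$ in $\sigma$ are $(2,1)$, with only $4\in A$ between them, and $(4,1)$, which are adjacent. So $A$ is closed in $\sigma$. But $s(\sigma)=2314$, and there $3\notin A$ sits between $2$ and $1$. Moreover $s^2(\sigma)|_A=214$ while $s^2(\sigma|_A)=s^2(241)=124$. So the iterated restriction identity itself fails for arbitrary closed sets, and no repair of the case analysis can save the general lemma.

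The faulty inference is ``$b$ precedes $c$ in $s(\sigma)$ forces $b$ to precede $c$ in $\sigma$.'' If $c$ precedes $b$ in $\sigma$ with no entry larger than $c$ between them, then $b$ is output first, as your own parenthetical says. That is exactly the configuration $c=3$, $b=2$ above, and in it closedness of $A$ in $\sigma$ says nothing about $c$.

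Your fallback does work, and quickly. The missing idea is to use original positions in $\pi$ rather than closedness in the previous iterate. Let $\tau=s^k(\pi)$, let $a<b$ lie in $A_i$ with $b$ before $a$ in $\tau$, and let $c>b$ lie between them in $\tau$.
\begin{itemize}
\item First, $c>b\ge\pi_i$ holds automatically.
\item Suppose $\ind_\pi(c)>i\ge\ind_\pi(a)$. Then $a<c$ with $a$ before $c$ in $\pi$, an increasing pair that $s$ never inverts. This contradicts $c$ preceding $a$ in $\tau$.
\end{itemize}
Hence $c\in A_i$, so $A_i$ is closed in every iterate. Induction then gives $s^k(\pi)|_{A_i}=s^k(\pi^i)$ for all $k$, and your endgame completes the proof.

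Once repaired, your route differs from the paper's. The paper inducts on $n$ via $\met{\pi}=\max(\met{\pi[:n]},\met{\pi_{\ge\pi_n}})$, assembled from an increasing-tail lemma, a prefix-subsequence lemma, and Defant's lemma $s^k(\pi_{\ge m})=s^k(\pi)_{\ge m}$. Your pair-rule argument gets the restriction identity for each $A_i$ directly. The same argument also recovers both of those subsequence lemmas as special cases.
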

For all $n\in\mathbb{N}$, a \textit{composition} of $n$ is a finite tuple of positive integers whose sum is $n$. The set of all compositions of $n$ is denoted by $\mathcal{C}(n)$. For every composition $\alpha=(\alpha_1,\alpha_2,\dots,\alpha_k)\in\mathcal{C}(n)$, define 
\begin{itemize}
\item $\ell(\alpha):=k$,
\item $w(\alpha):=\max_{i\in[k]} \alpha_i$,
\item $\alpha^{-i}:=(\alpha_1,\dots,\alpha_{i}-1,\dots,\alpha_k)$ for all $i\in [k]$ with $\alpha_{i}>1$,
\item $\alpha^-:=(\alpha_1,\alpha_2,\dots,\alpha_{k-1})$, and
\item $\alpha i:=(\alpha_1,\alpha_2,\dots,\alpha_k,i)$ for all $i\in\mathbb{N}$.
\end{itemize}
A \textit{linear extension} of a poset $(P,\ge^*)$ with $n$ elements is a bijection $f:[n]\to P$ for which $f(x)\ge^* f(y)$ implies $x\ge y$ for all $x,y\in[n]$. The set of all linear extensions of a poset $(P, \ge^*)$ is denoted by $\mathcal{L}(P)$.
\section{Stack-Sorting on $S_n'$}\label{sec:3}
In this section, we examine the evolution of $\pi\in S_n'$ under repeated applications of $s$. We begin by defining \emph{column} and \emph{block} sequences, which arise from identifying the entries of $s^{i-1}(\pi)$ sent to the right of $0$ after a single application of $s$. They serve a very similar purpose to $\mathcal{M}_{\ell}(\pi)$ and $\mathcal{B}_{\ell}(\pi)$, which were previously defined by Defant\cite{defant2} in the context of establishing asymptotic bounds for $\frac{1}{n!}\sum_{\pi\in S_n}\met{\pi}$.

\begin{definition}
For all $\pi\in S_n'$ and $1\le i\le \met{\pi}$, define the recurrent column sequence $C_{\pi,i}=(c_{\pi,i,j})_{j\ge1}$ such that
\[c_{\pi,i,1}:=\max(s^{i-1}(\pi)[:\ind_{s^{i-1}(\pi)}(0)])\]
and
\[c_{\pi,i,j}:=\max(s^{i-1}(\pi)[\ind_{s^{i-1}(\pi)}(c_{\pi,i,j-1})+1:\ind_{s^{i-1}(\pi)}(0)])\]
for all $j\ge 2$ so long as $\ind_{s^{i-1}(\pi)}(c_{\pi, i, j-1})+1<\ind_{s^{i-1}(\pi)}(0)$; otherwise, the recursion terminates.
\end{definition}
\begin{definition}
For all $\pi\in S_n'$ and $1\le i\le \met{\pi}$, define the block sequence $B_{\pi,i}=(b_{\pi,i,j})_{j=1}^{|C_{\pi,i}|}$ such that
\[b_{\pi,i,1}:=s^{i-1}(\pi)[:\ind_{s^{i-1}(\pi)}(c_{\pi,i,1})]\]
and
\[b_{\pi,i,j}:=s^{i-1}(\pi)[\ind_{s^{i-1}(\pi)}(c_{\pi,i,j-1})+1:\ind_{s^{i-1}(\pi)}(c_{\pi,i,j})]\]
for all $2\le j\le |C_{\pi,i}|$.
\end{definition}
\begin{example}\label{ex:ex1}
The running example is the permutation \[\pi = 9\ 3\ 10\ 7\ 8\ 2\ 6\ 1\ 4\ 5\ 0\in S_{10}'.\]
The image of $\pi$ under repeated applications of $s$ is illustrated below with elements of $C_{\pi,i}$ and $B_{\pi,i}$ labelled in red and brown respectively for all $1\le i\le 4$. The elements of $B_{\pi,i}$ are enclosed in parentheses. 
\[
\begin{array}{c}
(\c7{9}\ \c7{3})\ \c1{10}\ (\c7{7})\ \c1{8}\ (\c7{2})\ \c1{6}\ (\c7{1}\ \c7{4})\ \c1{5}\ 0 \\
\downarrow{s} \\
(\c7{3})\ \c1{9}\ \c1{7}\ (\c7{2}\ \c7{1})\ \c1{4}\ 0\ \c6{5}\ \c6{6}\ \c6{8}\ \c6{10} \\
\downarrow{s} \\
\c1{3}\ (\c7{1})\ \c1{2}\ 0\ \c6{4}\ \c6{5}\ \c6{6}\ \c6{7}\ \c6{8}\ \c6{9}\ \c6{10} \\
\downarrow{s} \\
\c1{1}\ 0\ \c6{2}\ \c6{3}\ \c6{4}\ \c6{5}\ \c6{6}\ \c6{7}\ \c6{8}\ \c6{9}\ \c6{10} \\
\downarrow{s} \\
0\ \c6{1}\ \c6{2}\ \c6{3}\ \c6{4}\ \c6{5}\ \c6{6}\ \c6{7}\ \c6{8}\ \c6{9}\ \c6{10}
\end{array}
\]
\end{example}
Immediately, we make three basic observations on the properties of $C_{\pi,i}$ and $B_{\pi,i}$.
\begin{enumerate}
\item $C_{\pi,i}$ is a sequence of right-to-left maxima on the prefix of $s^{i-1}(\pi)$ to the left of $0$.
\item $c_{\pi,i,j}$ is greater than $\max(b_{\pi,i,j})$ for all $1\le j\le |C_{\pi,i}|$.
\item $s^{i-1}(\pi)[:\ind_{s^{i-1}(\pi)}(0)]=b_{\pi,i,1}c_{\pi,i,1}\dots b_{\pi,i,|C_{\pi,i}|}c_{\pi,i,|C_{\pi,i}|}$.
\end{enumerate}
We next express the prefix of $s^i(\pi)$ to the left of $0$ strictly in terms of $B_{\pi,i}$. The following lemma is adapted from Lemma 3.2 of Chen, Luo, and Zhang \cite{words} to the setting of $s$. Similar ideas have also previously been presented by Defant\cite{defant2}.
\begin{lemma}\label{lem:blocks_after_s}
For all $\pi\in S_n'$ and $1\le i\le \met{\pi}$,
\[s^i(\pi)[:\ind_{s^i(\pi)}(0)]=s(b_{\pi,i,1})s(b_{\pi,i,2})\dots s(b_{\pi,i,|C_{\pi,i}|}).\]
\end{lemma}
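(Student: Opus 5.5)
We prove the lemma by tracing the stack during one application of $s$ to $\sigma:=s^{i-1}(\pi)$ and using the decomposition $\sigma[:\ind_{\sigma}(0)]=b_{\pi,i,1}c_{\pi,i,1}\cdots b_{\pi,i,k}c_{\pi,i,k}$ from observation (3), where $k=|C_{\pi,i}|$. Write $b_j=b_{\pi,i,j}$ and $c_j=c_{\pi,i,j}$. The central claim is an invariant: when the first entry of $b_j$ is about to be read, the stack consists exactly of $c_{j-1},c_{j-2},\dots,c_1$ (top to bottom), and the output so far is $s(b_1)\cdots s(b_{j-1})$. We prove it by induction on $j$; the base case $j=1$ (empty stack, empty output) is immediate. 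Note $c_1>c_2>\dots>c_k$, since each $c_j$ is a maximum of a suffix of the range defining $c_{j-1}$, which excludes $c_{j-1}$ itself. So the stack $c_{j-1},\dots,c_1$ is increasing from top to bottom, as required.

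For the inductive step we use observation (2) and the recursive definition of $c_j$: every entry of $b_j$, and every entry strictly between $c_{j-1}$ and $0$, is less than $c_j<c_{j-1}$. Hence while $b_j$ is processed, no entry of $b_j$ ever forces $c_{j-1}$ (or anything below it) to be popped, so $c_{j-1},\dots,c_1$ act as a permanent floor. The entries of $b_j$ then behave exactly as in a run of $s$ on $b_j$ alone, except that the last few entries may remain on the stack above the floor. Next $c_j$ arrives. It exceeds every entry of $b_j$ but is below $c_{j-1}$, so it pops all remaining $b_j$-entries and then is pushed. Popping everything at the end is precisely how $s(b_j)$ finishes. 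Therefore the output gains exactly $s(b_j)$ and the stack becomes $c_j,c_{j-1},\dots,c_1$, which establishes the invariant for $j+1$.

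After $c_k$ is pushed, the entry immediately following it is $0$, since the recursion terminates exactly when $c_k$ is adjacent to $0$. Because $0$ is the least entry, it is pushed on top of $c_k$ with no pop. At that moment the output is $s(b_1)\cdots s(b_k)$, and every entry of $\sigma$ to the left of $0$ except the $c_j$ is already in the output. Every later output entry appears only after $0$ is popped, because entries below $0$ on the stack cannot leave before it does. Hence $0$ is output directly after $s(b_1)\cdots s(b_k)$, and the $c_j$ together with the entries originally to the right of $0$ end up to the right of $0$. This gives the lemma.

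The main obstacle is the floor argument, which requires that entries of $b_j$ never force $c_{j-1}$ off the stack and that the stack simulation of $b_j$ matches $s(b_j)$ exactly. We handle it by stating and proving, as an auxiliary fact, that running $s$ on a word $w$ over a stack whose current top exceeds $\max(w)$, and then feeding an element larger than $\max(w)$ but smaller than that top, outputs exactly $s(w)$. This is a direct consequence of the definition of $s$, since the push/pop decisions only compare entries of $w$ with one another.
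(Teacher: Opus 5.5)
Your proof is correct and follows essentially the same route as the paper. Both run an induction on $j$ with the invariant that the stack holds $c_{j-1},\dots,c_1$ (top to bottom) and the output is $s(b_1)\cdots s(b_{j-1})$. Both use $c_{j-1}>c_j>\max(b_j)$ to show that the entries of $b_j$ are processed exactly as in $s(b_j)$ and then flushed by $c_j$, after which $0$ is pushed and immediately popped.

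One minor slip: entries strictly between $c_{j-1}$ and $0$ are at most $c_j$, not strictly less than it. This is harmless, since you only use the bound for entries of $b_j$.
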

\begin{proof}
Let $t_j:=j+2(|b_{\pi,i,1}|+|b_{\pi,i,2}|+\dots+|b_{\pi,i,j}|)$ for all $0\le j\le |C_{\pi,i}|$. By induction on $j$, we prove that when $s$ is applied to $s^{i-1}(\pi)$,
\begin{itemize}
\item the remaining input permutation begins with $b_{\pi,i,j+1}\dots c_{\pi,i,|C_{\pi,i}|}0,$
\item the stack consists of $c_{\pi,i,j}c_{\pi,i,j-1}\dots c_{\pi,i,1}$ from top to bottom, and
\item the output permutation is $s(b_{\pi,i,1})s(b_{\pi,i,2})\dots s(b_{\pi,i,j})$
\end{itemize}
after $t_j$ push and pop operations. For $j=0$, the claim is trivial. Now assume that the claim holds for $j-1\ge 0$. After $t_{j-1}$ operations, the remaining input permutation begins with $b_{\pi,i,j}c_{\pi,i,j}$ where $c_{\pi,i,j} > \max(b_{\pi,i,j})$. Further, the top element of the stack becomes $c_{\pi,i,j-1}>\max(b_{\pi,i,j})$ if $j-1>0$, and the stack is empty otherwise. In either case, elements already in the stack can never be popped out, and $c_{\pi,i,j}$ can never be pushed in until after all elements of $b_{\pi,i,j}$ are processed. Thus, in the following $2|b_{\pi,i,j}|+1=t_j-t_{j-1}$ additional operations, $s(b_{\pi,i,j})$ is appended to the output permutation, and then $c_{\pi,i,j}$ is pushed into the stack, which completes the inductive proof. 

In particular, by applying the claim to $j=|C_{\pi,i}|$, we observe that the output permutation is $s(b_{\pi,i,1})s(b_{\pi,i,2})\dots s(b_{\pi,i,|C_{\pi,i}|})$ when $0$ is the first element of the remaining input permutation. Once $0$ is pushed in and immediately popped out by the stack, we finally conclude that
\[s^i(\pi)[:\ind_{s^i(\pi)}(0)]=s(b_{\pi,i,1})s(b_{\pi,i,2})\dots s(b_{\pi,i,|C_{\pi,i}|}).\]
\end{proof}
\begin{corollary}\label{col_well_defined}
For all $\pi\in S_n'$, \[\bigsqcup_{1\le i\le \met{\pi}} \left\{c_{\pi,i,j}:1\le j\le |C_{\pi,i}|\right\}=[n].\]
\end{corollary}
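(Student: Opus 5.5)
The plan is to track the prefix $P_i:=s^{i-1}(\pi)[:\ind_{s^{i-1}(\pi)}(0)]$, the part of $s^{i-1}(\pi)$ to the left of $0$, and show two things. First, at each step the set of entries of $P_i$ is the disjoint union of the column entries $\{c_{\pi,i,j}\}_j$ and the entries of $P_{i+1}$. Second, $P_i$ is nonempty for every $1\le i\le\met{\pi}$, while $P_{\met{\pi}+1}$ is empty. Telescoping then expresses the entry set of $P_1=\pi[:n+1]$, which is $[n]$, as the disjoint union of all column sets.

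\textbf{Step 1 (one step of the recursion).} By observation (3), $P_i=b_{\pi,i,1}c_{\pi,i,1}\dots b_{\pi,i,|C_{\pi,i}|}c_{\pi,i,|C_{\pi,i}|}$. Since the $c_{\pi,i,j}$ are distinct positions of a permutation, the entry set of $P_i$ is the disjoint union of $\{c_{\pi,i,j}:1\le j\le|C_{\pi,i}|\}$ and $\bigsqcup_j b_{\pi,i,j}$. \Cref{lem:blocks_after_s} says $P_{i+1}=s(b_{\pi,i,1})\cdots s(b_{\pi,i,|C_{\pi,i}|})$. Because $s$ only permutes entries, the entry set of $P_{i+1}$ is exactly $\bigsqcup_j b_{\pi,i,j}$. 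Hence
\[\operatorname{set}(P_i)=\{c_{\pi,i,j}\}_j\sqcup \operatorname{set}(P_{i+1}),\]
valid for $1\le i\le\met{\pi}$, where $P_{i+1}$ is defined via $s^i(\pi)$.

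\textbf{Step 2 (range of $i$).} Since $\pi\in S_n'$ contains $0$, $s^k(\pi)$ is increasing if and only if $0$ is its first entry, that is, if and only if $P_{k+1}$ is empty. This holds because the remaining entries after $0$ are then exactly the entries other than $0$, and the question reduces to whether they are sorted. I need to check this carefully. If $P_{k+1}$ is empty, the suffix after $0$ must still be increasing. This follows from the standard fact that after one application of $s$, the entries to the right of $0$ are output in increasing order: $0$ is pushed last, and everything after $0$ is the remaining stack, popped in increasing order.

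Indeed, in $\pi\in S_n'$, $0$ is the last input. So in $s^k(\pi)$ with $k\ge1$, everything after $0$ is the stack contents, which are increasing from top to bottom. Hence $s^k(\pi)$ for $k\ge1$ is increasing exactly when $0$ comes first. For $k=0$, $\pi$ is increasing only if $n=0$, a trivial case.

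Therefore $P_i\neq\emptyset$ for $i\le\met{\pi}$, so the column sequences there are well-defined and nonempty, and $P_{\met{\pi}+1}=\emptyset$.

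\textbf{Step 3 (telescoping).} Iterating Step 1 from $i=1$ to $\met{\pi}$ gives
\[[n]=\operatorname{set}(P_1)=\bigsqcup_{i=1}^{\met{\pi}}\{c_{\pi,i,j}\}_j\sqcup \operatorname{set}(P_{\met{\pi}+1}),\]
and the last set is empty.

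The main obstacle is the bookkeeping in Step 2: justifying that the part after $0$ is always increasing, so that "sorted" is equivalent to "nothing left of $0$". This justification uses the ending-in-least-entry property, together with the observation that $s$ preserves the pattern of being $0$-last on the relevant portion.
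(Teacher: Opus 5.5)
Your Steps 1 and 3 match the paper's argument, written out more explicitly. The paper also uses \Cref{lem:blocks_after_s} to identify $\{c_{\pi,i,j}\}_j$ as exactly the entries that cross from the left of $0$ to the right of $0$ during the $i$th pass. It observes that crossed entries never return, which is your identity $\mathrm{set}(P_i)=\{c_{\pi,i,j}\}_j\sqcup\mathrm{set}(P_{i+1})$, and that everything has crossed after $\met{\pi}$ passes. Your telescoping is a clean way to get disjointness and coverage in one stroke.

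The flaw is in Step 2. The claim that in $s^k(\pi)$, $k\ge1$, ``everything after $0$ is the stack contents'' holds only for $k=1$. It relies on $0$ being the last input, which is true of $\pi$ but not of $s^{k-1}(\pi)$ for $k\ge2$. For example, take $\pi=2\,3\,1\,0$. Then $s(\pi)=2\,0\,1\,3$ and $s^2(\pi)=0\,1\,2\,3$. When $0$ is popped during the second pass, the stack holds only $2$, while $1$ and $3$ come from the remaining input. In general, the tail after $0$ is a merge of the stack contents with the tail of $s^{k-1}(\pi)$ after $0$. It is sorted only by an induction on $k$; this is exactly \Cref{lem:increasingtail} with $\pi_n=0$. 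The closing remark that $s$ ``preserves the pattern of being $0$-last'' is also not true as stated.

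This does not damage the set equality itself. The only part of Step 2 that Step 3 uses is $P_{\met{\pi}+1}=\emptyset$, which is immediate because $s^{\met{\pi}}(\pi)$ is increasing and $0$ is its least entry. The direction you botched only serves to show $P_i\neq\emptyset$ for $i\le\met{\pi}$, i.e.\ that $C_{\pi,i}$ is well-defined, and the paper simply presupposes this in its definition. So either repair that paragraph with the inductive merge argument or drop it.
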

\begin{proof}
By \Cref{lem:blocks_after_s}, $\left\{c_{\pi,i,j}:1\le j\le |C_{\pi,i}|\right\}$ is precisely the set of all elements sent to the right of $0$ when $s$ is applied to $s^{i-1}(\pi)$. Since every element of $[n]$ is eventually sent to the right of $0$, the union of all such sets over $1\le i\le \met{\pi}$ must be $[n]$. Moreover, because elements that are sent to the right of $0$ remain there under subsequent applications of $s$, these sets are pairwise disjoint.
\end{proof}
\begin{corollary}\label{left_well_defined}
For all $\pi\in S_n'$ and $2\le i\le \met{\pi}$, the sequence $C_{\pi,i}$ is a subsequence of $(\max(b_{\pi,i-1,j}))_{j=1}^{|C_{\pi,i-1}|}$.
\end{corollary}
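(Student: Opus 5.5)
By \Cref{lem:blocks_after_s}, the prefix of $s^{i-1}(\pi)$ to the left of $0$ is
\[
s(b_{\pi,i-1,1})\,s(b_{\pi,i-1,2})\cdots s(b_{\pi,i-1,m}),\qquad m=|C_{\pi,i-1}|.
\]
The column sequence $C_{\pi,i}$ is, by observation (1), the sequence of right-to-left maxima of this prefix, read from left to right. Indeed, the recursion defining $c_{\pi,i,j}$ repeatedly takes the maximum of the portion strictly between the previous maximum and $0$, and this produces exactly the right-to-left maxima. So the plan is to show two things: every right-to-left maximum of this concatenation is the maximum of one of the blocks $b_{\pi,i-1,j}$, and these maxima occur in increasing order of $j$.

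First I will use the standard fact that the stack-sorting map always outputs the maximum entry of a word last. Once $\max(w)$ is pushed, nothing in the stack lies below it in value. It can only be popped when the input is exhausted, and it sits at the bottom, so $s(w)$ ends in $\max(w)$. Each factor $s(b_{\pi,i-1,j})$ is nonempty, because the recursion only continues while there is a nonempty gap. Hence each factor ends with $\max(b_{\pi,i-1,j})$.

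Next take any right-to-left maximum $x$ of the concatenation, and say $x$ lies in the factor $s(b_{\pi,i-1,j})$. Then $x$ exceeds every entry to its right. In particular, if $x$ were not the last entry of that factor, it would be larger than the last entry $\max(b_{\pi,i-1,j})$. But $x$ is an element of $b_{\pi,i-1,j}$, so this is impossible. Therefore $x=\max(b_{\pi,i-1,j})$. Distinct right-to-left maxima lie in distinct factors, since each is the last entry of its factor, and they appear in left-to-right order. So $C_{\pi,i}$, listed in order, is $(\max(b_{\pi,i-1,j}))_{j\in J}$ for some increasing index set $J\subseteq[m]$, which is the claimed subsequence.

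The only delicate point is the identification of $C_{\pi,i}$ with the right-to-left maxima in their left-to-right order, including the termination condition of the recursion. I expect this to follow immediately from the definition, since each step takes the maximum of the remaining suffix before $0$.
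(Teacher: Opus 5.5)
Your proof is correct and is essentially the paper's argument: both write the prefix of $s^{i-1}(\pi)$ before $0$ as $s(b_{\pi,i-1,1})\cdots s(b_{\pi,i-1,m})$ via \Cref{lem:blocks_after_s}, and both use that $s(b)$ ends in $\max(b)$ (a fact you state and justify, while the paper uses it tacitly) to force every right-to-left maximum to be the last entry of its factor, hence that block's maximum. One small correction: blocks can be empty, for example $b_{\pi,2,2}$, which sits between $9$ and $7$ in $s(\pi)$ in \Cref{ex:ex1}, so your remark that every factor is nonempty is false; it is also never needed, since the factor containing a given right-to-left maximum is automatically nonempty.
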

\begin{proof}
By \Cref{lem:blocks_after_s}, both $C_{\pi,i}$ and $(\max(b_{\pi,i-1,j}))_{j=1}^{|C_{\pi,i-1}|}$ occur as subsequences of $s^{i-1}(\pi)$. It therefore suffices to prove that $C_{\pi,i}$ is a subset of $(\max(b_{\pi,i-1,j}))_{j=1}^{|C_{\pi,i-1}|}$. Let $j_k$ be the unique index for which $c_{\pi,i,k}\in b_{\pi,i-1,j_k}$ for all $1\le k\le |C_{\pi,i}|$. If $c_{\pi,i,k}\neq \max(b_{\pi,i-1,j_k})$, then $\max(b_{\pi,i-1,j_k})$ must be between $c_{\pi,i,k}$ and $0$ in $s^{i-1}(\pi)$ by \Cref{lem:blocks_after_s}, which contradicts the definition of $c_{\pi,i,k}$. Hence, $c_{\pi,i,k} = \max(b_{\pi,i-1,j_k})$ for all $1\le k\le |C_{\pi,i}|$, and the result follows.
\end{proof}
Next, we introduce some definitions required for the heart of the argument. In the definitions that follow, the reader may benefit greatly by referring to \Cref{ex:ex2}.
\begin{definition}
For every permutation $\pi\in S_n'$ and integer $i\in[n]$, define $\col_{\pi}(i)$ and $\colpos_{\pi}(i)$ to be the unique integers for which \[c_{\pi,\col_{\pi}(i),\colpos_{\pi}(i)}=i.\] The existence and uniqueness of $\col_{\pi}(i)$ and $\colpos_{\pi}(i)$ are proven in \Cref{col_well_defined}.
\end{definition}
\begin{definition}
For every permutation $\pi\in S_n'$ and integer $i\in[n]$ such that $\col_{\pi}(i)>1$, there exists a unique index $1\le j\le |C_{\pi,\col_{\pi}(i)-1}|$ for which $i=\max(b_{\pi,\col_{\pi}(i)-1,j})$ by \Cref{left_well_defined}. We define \[\leftof_{\pi}(i):=c_{\pi,\col_{\pi}(i)-1,j}.\]
\end{definition}
\begin{definition}
For every permutation $\pi\in S_n'$ and integer $i\in[n]$, define $\row_{\pi}(i)$ recursively such that \[
\row_{\pi}(i):=
\begin{cases}
\colpos_{\pi}(i) & \text{if } \col_{\pi}(i)=1\\
\row_{\pi}(\leftof_{\pi}(i)) & \text{if } \col_{\pi}(i)>1.
\end{cases}
\] Since $\col_{\pi}(\leftof_{\pi}(i))=\col_{\pi}(i)-1$, the recursion must terminate, and $\row_{\pi}(i)$ is well defined.
\end{definition}
\begin{definition}
For every permutation $\pi\in S_n'$ and integer $i\in[n]$ such that $\colpos_{\pi}(i)>1$, define \[\upof_{\pi}(i):=c_{\pi,\col_{\pi}(i),\colpos_{\pi}(i)-1}.\]
\end{definition}
\begin{definition}
For every permutation $\pi\in S_n'$, define the composition 
\[\alpha_{\pi}:=\left(\left|\{i\in[n]: \row_{\pi}(i)=j\}\right|\right)_{j=1}^{|C_{\pi,1}|}.\]
Note that \[\row_{\pi}(i)=\row_{\pi}(\leftof_{\pi}^{\col_{\pi}(i)-1}(i))\in [|C_{\pi,1}|]\] for all $i\in [n]$, so $\alpha_{\pi}$ is a composition of $n$. 
\end{definition}
\begin{definition}
For every composition $\alpha\in \mathcal{C}(n)$, define the \emph{composition diagram}
\[D(\alpha) := \{(i,j)\in \mathbb{N}^2: i\le \alpha_j\}.\]
We impose a partial order relation $\ge_D$ on $D(\alpha)$ such that $(x,y)\ge_{D}(z,w)$ if and only if $x\le z$ and $y\le w$ for all $(x,y),(z,w)\in D(\alpha)$. In particular, $(1,1)$ is the maximal element.
\end{definition}
We refer to $D(\alpha_\pi)$ as the \emph{stack-sorting diagram} of $\pi$.
\begin{definition}
For every permutation $\pi\in S_n'$, define the function $T_{\pi}:[n]\to D(\alpha_\pi)$ such that
\[T_{\pi}(i):=(\col_{\pi}(i), \row_{\pi}(i))\]
for all $i\in[n]$.
\end{definition}
We refer to $T_{\pi}$ as the \emph{stack-sorting tableau} of $\pi$.
\begin{example}\label{ex:ex2}
Recall the example permutation $\pi$ given in \Cref{ex:ex1}. We visualize $T_\pi$ in English notation below. In particular, note that $\leftof_{\pi}(i)$ and $\upof_{\pi}(i)$ precisely refer to the entries immediately left and above of $i$ in the diagram respectively for all $i\in[10]$.
\begin{center}
\begin{minipage}[t]{0.45\textwidth}
\centering
\[
\begin{array}{c}
\c2{9}\ \c3{3}\ \c1{10}\ \c2{7}\ \c1{8}\ \c3{2}\ \c1{6}\ \c4{1}\ \c2{4}\ \c1{5}\ 0 \\
\downarrow{s} \\
\c3{3}\ \c2{9}\ \c2{7}\ \c3{2}\ \c4{1}\ \c2{4}\ 0\ \c1{5}\ \c1{6}\ \c1{8}\ \c1{10} \\
\downarrow{s} \\
\c3{3}\ \c4{1}\ \c3{2}\ 0\ \c2{4}\ \c1{5}\ \c1{6}\ \c2{7}\ \c1{8}\ \c2{9}\ \c1{10} \\
\downarrow{s} \\
\c4{1}\ 0\ \c3{2}\ \c3{3}\ \c2{4}\ \c1{5}\ \c1{6}\ \c2{7}\ \c1{8}\ \c2{9}\ \c1{10} \\
\downarrow{s} \\
0\ \c4{1}\ \c3{2}\ \c3{3}\ \c2{4}\ \c1{5}\ \c1{6}\ \c2{7}\ \c1{8}\ \c2{9}\ \c1{10}
\end{array}
\]
\end{minipage}
\begin{minipage}[t]{0.45\textwidth}
\centering
\[
\ytableaushort{{\c1{10}}{\c2{9}}{\c3{3}},{\c1{8}}{\c2{7}},{\c1{6}},{\c1{5}}{\c2{4}}{\c3{2}}{\c4{1}}}
\]
\end{minipage}
\end{center}
\end{example}
We now work towards showing that $T_{\pi}$ is a linear extension of $D(\alpha_\pi)$.
\begin{lemma}\label{lem:row}
For all $\pi\in S_n'$ and $1\le i\le \met{\pi}$, the sequence $(\row_{\pi}(c_{\pi,i,j}))_{j=1}^{|C_{\pi,i}|}$ is strictly increasing.
\end{lemma}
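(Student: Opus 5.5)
Induction on $i$. For $i=1$, every $c_{\pi,1,j}$ has column $1$, so $\row_{\pi}(c_{\pi,1,j})=\colpos_{\pi}(c_{\pi,1,j})=j$, which is strictly increasing in $j$.

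For the inductive step, assume the statement for $i-1$ and take $2\le i\le\met{\pi}$. By \Cref{left_well_defined}, $C_{\pi,i}$ is a subsequence of $(\max(b_{\pi,i-1,j}))_{j=1}^{|C_{\pi,i-1}|}$, and the proof of that corollary shows the order is preserved. So there are indices $j_1<j_2<\dots<j_{|C_{\pi,i}|}$ with
\[c_{\pi,i,k}=\max(b_{\pi,i-1,j_k}).\]
The indices are strictly increasing for two reasons. First, the blocks $s(b_{\pi,i-1,j})$ occupy consecutive, disjoint segments of the prefix of $s^{i-1}(\pi)$ in increasing order of $j$, by \Cref{lem:blocks_after_s}. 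Second, the $c_{\pi,i,k}$ appear left to right in $s^{i-1}(\pi)$ in increasing order of $k$. Distinct $c_{\pi,i,k}$ lie in distinct blocks, since each is the maximum of its block, so the $j_k$ are distinct and hence strictly increasing.

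By the definition of $\leftof_{\pi}$, we have $\leftof_{\pi}(c_{\pi,i,k})=c_{\pi,i-1,j_k}$. The recursive definition of $\row_{\pi}$ then gives
\[\row_{\pi}(c_{\pi,i,k})=\row_{\pi}(c_{\pi,i-1,j_k}).\]
Since $j_k$ is strictly increasing in $k$, the inductive hypothesis for $i-1$ makes $\row_{\pi}(c_{\pi,i-1,j_k})$ strictly increasing in $k$. This completes the induction.

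The main point to check carefully is that the order-preserving injection $k\mapsto j_k$ is genuinely strictly increasing and not merely injective. This rests on \Cref{lem:blocks_after_s}: the prefix of $s^{i-1}(\pi)$ left of $0$ is exactly the concatenation $s(b_{\pi,i-1,1})\cdots s(b_{\pi,i-1,|C_{\pi,i-1}|})$ in that order, so left-to-right position determines block index monotonically. Everything else is unwinding definitions.
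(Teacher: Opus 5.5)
Your proof is correct and follows the same route as the paper. Both argue by induction on $i$, using \Cref{lem:blocks_after_s} to show that the block indices $j_k$ (the paper's $\colpos_{\pi}(\leftof_{\pi}(c_{\pi,i,k}))$) are strictly increasing, so the rows along $C_{\pi,i}$ form a subsequence of the strictly increasing rows along $C_{\pi,i-1}$. Your argument that $k\mapsto j_k$ is both monotone and injective just spells out a step the paper states in one line.
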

\begin{proof}
We induct on $i$. For $i=1$, we have \[(\row_{\pi}(c_{\pi,i,j}))_{j=1}^{|C_{\pi,i}|}=(\colpos_{\pi}(c_{\pi,1,j}))_{j=1}^{|C_{\pi,1}|}=(j)_{j=1}^{|C_{\pi,1}|},\] which is strictly increasing. Now assume that the claim holds for $i-1\ge1$. Since
\[(c_{\pi,i,j})_{j=1}^{|C_{\pi,i}|}=(\max(b_{\pi,i-1,\colpos_{\pi}(\leftof_{\pi}(c_{\pi,i,j}))}))_{j=1}^{|C_{\pi,i}|}\]
is a subsequence of $s^{i-1}(\pi)$, the sequence of indices $(\colpos_{\pi}(\leftof_{\pi}(c_{\pi,i,j})))_{j=1}^{|C_{\pi,i}|}$ must be strictly increasing by \Cref{lem:blocks_after_s}, which subsequently implies that
\begin{itemize}
\item $(\leftof_{\pi}(c_{\pi,i,j})))_{j=1}^{|C_{\pi,i}|}$ is a subsequence of $(c_{\pi,i-1,j})_{j=1}^{|C_{\pi,i-1}|}$ and
\item $(\row_{\pi}(\leftof_{\pi}(c_{\pi,i,j})))_{j=1}^{|C_{\pi,i}|}=(\row_{\pi}(c_{\pi,i,j}))_{j=1}^{|C_{\pi,i}|}$ is a subsequence of \\ $(\row_{\pi}(c_{\pi,i-1,j}))_{j=1}^{|C_{\pi,{i-1}}|}$.
\end{itemize} 
By the induction hypothesis, recall that $(\row_{\pi}(c_{\pi,i-1,j}))_{j=1}^{|C_{\pi,{i-1}}|}$ is strictly increasing, and hence $(\row_{\pi}(c_{\pi,i,j}))_{j=1}^{|C_{\pi,i}|}$ is strictly increasing as well.
\end{proof}
\begin{theorem}
Let $\pi\in S_n'$ and $i,j\in [n]$. If $T_{\pi}(i)\ge_{D}T_{\pi}(j)$, then $i\ge j$.
\end{theorem}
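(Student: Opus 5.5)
Since $\ge_D$ is the product order on $D(\alpha_\pi)$, I will reduce it to its cover relations. A cell $(x,y)$ can reach $(z,w)$ with $x\le z$, $y\le w$ by unit steps that increase one coordinate at a time and stay inside $D(\alpha_\pi)$, so it suffices to check two kinds of steps. Here $(x,y)$ means column $x$, row $y$, matching $T_\pi(i)=(\col_\pi(i),\row_\pi(i))$. \textbf{Horizontal:} if $T_\pi(j)$ lies immediately right of $T_\pi(i)$ in the same row, then $i>j$. \textbf{Vertical:} if $T_\pi(i)=(x,y)$, $T_\pi(j)=(x,y')$ with $y<y'$ in the same column, then $i>j$.

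To justify the reduction, first note that each row $y$ of $D(\alpha_\pi)$ is an initial segment $\{(1,y),\dots,(\alpha_y,y)\}$. By the definition of $\row$ via $\leftof$, the cells of row $y$ are filled by the chain $c_{\pi,1,y}$, then the element $k$ with $\leftof_\pi(k)=c_{\pi,1,y}$, and so on. Such a $k$ is unique, since $k=\max(b_{\pi,i-1,j})$ for the appropriate $j$. Hence row $y$ is exactly this chain.

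To go from $(x,y)$ to $(z,w)$, I will move horizontally in row $y$ from column $x$ to column $z$ and then vertically in column $z$ from row $y$ to row $w$. This path stays in the diagram only if $\alpha_y\ge z$, which need not hold. So instead I will first move down in column $x$ to row $w$; this is fine since $(x,w)$ exists because $\alpha_w\ge z\ge x$. Then I will move right along row $w$ to $(z,w)$. Transitivity of $>$ on integers finishes the reduction, and the case $i=j$ is trivial.

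\textbf{Horizontal step.} If $T_\pi(j)$ is immediately right of $T_\pi(i)$, then $\leftof_\pi(j)=i$. This means $j=\max(b_{\pi,\col(i),\colpos(i)})$ and $i=c_{\pi,\col(i),\colpos(i)}$. By observation (2), $c_{\pi,k,l}>\max(b_{\pi,k,l})$, so $i>j$.

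\textbf{Vertical step.} Within a fixed column $x$, the elements are $c_{\pi,x,1},c_{\pi,x,2},\dots$, and by \Cref{lem:row} their rows are strictly increasing in the index. So a smaller row means an earlier position in $C_{\pi,x}$. By observation (1), $C_{\pi,x}$ is a sequence of right-to-left maxima of the prefix before $0$, and each $c_{\pi,x,l}$ is the maximum of the portion to its right before $0$. Hence the sequence is strictly decreasing, giving $c_{\pi,x,l}>c_{\pi,x,l'}$ for $l<l'$.

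The main obstacle is carefully verifying that rows are contiguous initial segments, i.e.\ that the chain along a row has no gaps and that $\alpha_y$ equals its length. Only then can the unit steps be taken inside $D(\alpha_\pi)$ and the cover-relation reduction be valid. I expect this follows from the uniqueness of $\leftof$ preimages (\Cref{left_well_defined}) together with $\col_\pi(\leftof_\pi(i))=\col_\pi(i)-1$.
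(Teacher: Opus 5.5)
Your argument is correct and is essentially the paper's proof. Both route through the intermediate cell $(\col_\pi(i),\row_\pi(j))$, which is occupied by $\leftof_\pi^{\col_\pi(j)-\col_\pi(i)}(j)$. Both then use $c_{\pi,k,l}>\max(b_{\pi,k,l})$ along the row, and \Cref{lem:row} together with the strict decrease of $C_{\pi,\col_\pi(i)}$ within the column. The paper phrases these two facts as $\leftof_\pi(k)>k$ and $\upof_\pi(k)>k$, and walks from $j$ rather than from $i$.

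The ``main obstacle'' you flag (rows being full initial segments) is not actually needed. The cells you pass through are supplied directly by the iterates $\leftof_\pi^m(j)$, and \Cref{lem:row} alone compares $i$ with $\leftof_\pi^{\col_\pi(j)-\col_\pi(i)}(j)$, including in the case where the two rows coincide.
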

\begin{proof}
By basic properties of $C_{\pi,i}$, we can immediately deduce that $\leftof_{\pi}(i)>i$ if $\col_{\pi}(i)>1$ and $\upof_{\pi}(i)>i$ if $\colpos_{\pi}(i)>1$ for all $i\in[n]$. Therefore, for all $i,j\in[n]$ satisfying $T_{\pi}(i)\ge_{D}T_{\pi}(j)$, it suffices to find nonnegative integers $x$ and $y$ such that \[i=\upof_{\pi}^x(\leftof_{\pi}^y(j)).\] 
Consider
\[(x,y):=\left(\colpos_{\pi}\left(\leftof_{\pi}^{\col_{\pi}(j)-\col_{\pi}(i)}(j)\right)-\colpos_{\pi}(i), \col_{\pi}(j)-\col_{\pi}(i)\right).\]
By definition, note that
\[T_{\pi}\left(\leftof_{\pi}^{y}(j)\right)=(\col_{\pi}(i), \row_{\pi}(j)).\]
Since $\row_{\pi}(j)\ge \row_{\pi}(i)$, \Cref{lem:row} then implies that 
\[\colpos_{\pi}\left(\leftof_{\pi}^{y}(j)\right) \ge \colpos_{\pi}(i),\]
and
\[\upof_{\pi}^{x}\left(\leftof_{\pi}^{y}(j)\right) = i,\]
as desired.
\end{proof}
\begin{corollary}
For all $\pi\in S_n'$, the function $T_{\pi}$ is a bijection.
\end{corollary}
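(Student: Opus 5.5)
To show that $T_{\pi}$ is a bijection, I will first argue that it is injective and then obtain surjectivity by counting. The domain is $[n]$. The codomain $D(\alpha_\pi)$ has exactly $n$ cells, because $\alpha_\pi$ is a composition of $n$: indeed $\sum_j \alpha_j$ counts the elements of $[n]$ by their row. So injectivity suffices. First, though, I must make sure $T_{\pi}$ actually lands in $D(\alpha_\pi)$. This means showing that $\col_{\pi}(i)\le (\alpha_\pi)_{\row_{\pi}(i)}$, i.e.\ that row $\row_\pi(i)$ contains at least $\col_\pi(i)$ elements. Here I use the chain $i,\leftof_{\pi}(i),\leftof_{\pi}^2(i),\dots,\leftof_{\pi}^{\col_{\pi}(i)-1}(i)$. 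These elements all share the row $\row_{\pi}(i)$ by the definition of $\row_\pi$. They are pairwise distinct because their columns $\col_\pi(i),\col_\pi(i)-1,\dots,1$ are distinct. Hence the row has at least $\col_\pi(i)$ elements, as needed.

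For injectivity, suppose $T_{\pi}(i)=T_{\pi}(j)$. Then both $T_\pi(i)\ge_D T_\pi(j)$ and $T_\pi(j)\ge_D T_\pi(i)$ hold trivially, since $\ge_D$ is reflexive. The preceding theorem then gives $i\ge j$ and $j\ge i$, so $i=j$.

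Alternatively, I could read injectivity directly off the preceding proof: with $(x,y)=(0,0)$ the argument there exhibits $i=\upof_\pi^0(\leftof_\pi^0(j))=j$. Citing the theorem is cleaner, though.

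Surjectivity then follows because an injection from an $n$-element set into an $n$-element set is a bijection. The only step needing any care is the well-definedness check that the image lies in $D(\alpha_\pi)$, which the chain argument handles. As a consequence, $T_\pi$ is a linear extension of $(D(\alpha_\pi),\ge_D)$, with the bijection direction matching the paper's convention via $T_\pi^{-1}$.
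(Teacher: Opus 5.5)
Your proposal is correct and matches the argument the paper leaves implicit after the preceding theorem: applying that theorem in both directions (using reflexivity of $\ge_D$) gives injectivity, and $|D(\alpha_\pi)|=n$ because $\alpha_\pi\in\mathcal{C}(n)$, so injectivity implies surjectivity. Your explicit check via the $\leftof_\pi$-chain that $T_\pi$ actually lands in $D(\alpha_\pi)$ is a useful detail, since the paper simply assumes it when it declares $D(\alpha_\pi)$ as the codomain.
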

\section{Enumerating $t$-Stack-Sortable Permutations in $S_n'$}
In this section, we apply the machinery developed in \Cref{sec:3} to prove \Cref{thm:result}. We begin by reformulating operations previously defined in terms of permutations to be strictly in terms of compositions.  
\begin{definition}
For every composition $\alpha\in\mathcal{C}(n)$ and integer pair $(i,j)\in \mathbb{N}^2$, define
\begin{itemize}
\item $\col_{\alpha}(i,j):=i$,
\item $\row_{\alpha}(i,j):=j$,
\item $\colpos_{\alpha}(i,j):=|U|$,
\item $\leftof_{\alpha}(i,j):=(i-1,j)$, and
\item $\upof_{\alpha}(i,j):=(i,\max\nolimits_{(i,j')\in U}j')$
\end{itemize}
such that $U:=\{(i,j')\in D(\alpha)\mid j'<j\}\cup\{(i,0)\}$.
\end{definition}
\begin{lemma}
Let $\pi\in S_n'$. For all $i\in[n]$,
\begin{itemize}
\item $\col_{\pi}(i)=\col_{\alpha_{\pi}}(T_{\pi}(i))$,
\item $\row_{\pi}(i)=\row_{\alpha_{\pi}}(T_{\pi}(i))$,
\item $\colpos_{\pi}(i)=\colpos_{\alpha_{\pi}}(T_{\pi}(i))$,
\item $T_{\pi}(\leftof_{\pi}(i))=\leftof_{\alpha_{\pi}}(T_{\pi}(i))$ if $\col_{\pi}(i)>1$, and
\item $T_{\pi}(\upof_{\pi}(i))=\upof_{\alpha_{\pi}}(T_{\pi}(i))$ if $\colpos_{\pi}(i)>1$.
\end{itemize}
\end{lemma}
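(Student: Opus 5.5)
The plan is to verify the five items in order, since each later item relies on the earlier ones. The first two hold by definition: $T_{\pi}(i)=(\col_{\pi}(i),\row_{\pi}(i))$, and $\col_{\alpha}$, $\row_{\alpha}$ just read off the two coordinates. For the fourth item, $\col_{\pi}(\leftof_{\pi}(i))=\col_{\pi}(i)-1$ (as noted after the definition of $\row_{\pi}$), and $\row_{\pi}(\leftof_{\pi}(i))=\row_{\pi}(i)$ by the recursive definition of $\row_{\pi}$. Hence $T_{\pi}(\leftof_{\pi}(i))=(\col_{\pi}(i)-1,\row_{\pi}(i))=\leftof_{\alpha_\pi}(T_{\pi}(i))$.

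The substantive item is the third. Fix $i$ and set $c:=\col_{\pi}(i)$. The elements of $[n]$ whose image under $T_{\pi}$ lies in column $c$ are exactly $c_{\pi,c,1},\dots,c_{\pi,c,|C_{\pi,c}|}$. Since $T_{\pi}$ is a bijection onto $D(\alpha_\pi)$ (the corollary above), the cells of $D(\alpha_\pi)$ in column $c$ are exactly the images of these elements. By \Cref{lem:row}, their rows $\row_{\pi}(c_{\pi,c,j})$ strictly increase in $j$. Therefore the cells $(c,j')\in D(\alpha_\pi)$ with $j'<\row_{\pi}(i)$ are exactly the images of $c_{\pi,c,1},\dots,c_{\pi,c,\colpos_{\pi}(i)-1}$. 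Adding the dummy cell $(c,0)$ gives $|U|=\colpos_{\pi}(i)$, which is the third item.

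The fifth item follows from the same ordering. When $\colpos_{\pi}(i)>1$, the cell in $U$ with the largest row (other than the dummy) is the image of the last of those elements, namely $c_{\pi,c,\colpos_{\pi}(i)-1}=\upof_{\pi}(i)$. So $\upof_{\alpha_\pi}(T_{\pi}(i))=(c,\row_{\pi}(\upof_{\pi}(i)))=T_{\pi}(\upof_{\pi}(i))$. Because $\colpos_{\pi}(i)>1$, $U$ contains a real cell, so the maximum is not attained at the dummy row $0$.

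The main obstacle is the identification of column $c$ of $D(\alpha_\pi)$ with $T_{\pi}(C_{\pi,c})$. This requires surjectivity of $T_{\pi}$, that is, that the image fills the diagram. I would take this from the bijection corollary. If that corollary is not usable directly, I would argue by counting: each row $j$ has $\alpha_j$ preimages, these lie in distinct columns, and the columns are downward closed because $\leftof$ stays in the same row. Hence row $j$ is occupied exactly in columns $1,\dots,\alpha_j$.
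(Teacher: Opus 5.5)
Your proposal is correct and takes the same route as the paper, whose proof says only that items 1, 2 and 4 hold by definition and items 3 and 5 follow from \Cref{lem:row}; you supply the details of that deduction. You are also right that the argument tacitly needs $T_{\pi}([n])=D(\alpha_{\pi})$, which the paper's bijection corollary asserts without proof, and your fallback argument fills this in correctly: rows within a column are distinct by \Cref{lem:row}, and the occupied columns in each row are downward closed under $\leftof_{\pi}$.
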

\begin{proof}
The first, second, and fourth statements are immediate by definition. The third and fifth statements are consequences of \Cref{lem:row}.
\end{proof}
\begin{definition}
Let $\alpha\in\mathcal{C}(n)$, $T\in\mathcal{L}(D(\alpha))$, and $\pi\in S_n'$. For every entry $(i,j)\in D(\alpha)$ and integer $k$ satisfying $1\le k<i$, define the \textit{generalized block}
\[B_{\pi,T}(i,j,k):=\begin{cases}s^{k-1}(\pi)\big[\ind_{s^{k-1}(\pi)}(T^{-1}(k,j'))+1 : \ind_{s^{k-1}(\pi)}(T^{-1}(k,j))+1\big] &\text{if }j'>0 \\ s^{k-1}(\pi)\big[: \ind_{s^{k-1}(\pi)}(T^{-1}(k,j))+1\big] &\text{if }j'=0\end{cases}\]
such that $j':=\row_{\alpha}(\upof_{\alpha}(\leftof_{\alpha}(i,j)))$.
\end{definition}
\begin{lemma}\label{lem:generalizedblockstuff}
For all $\pi\in S_n'$, $(i,j)\in D(\alpha_{\pi})$, and $1\le k<i$, 
\[B_{\pi,T_{\pi}}(i,j,k) \subseteq B_{\pi,T_{\pi}}(i,j,1)\]
and
\[B_{\pi,T_{\pi}}(i,j,1) \setminus B_{\pi,T_{\pi}}(i,j,k)\subseteq \{T_{\pi}^{-1}(i',j')\mid (i',j')\in D(\alpha_{\pi}), i'<k\}.\]
\end{lemma}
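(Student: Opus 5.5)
The plan is to prove the sharper statement that, for $1\le k<i-1$, $B_{\pi,T_{\pi}}(i,j,k+1)$ is exactly $B_{\pi,T_{\pi}}(i,j,k)$ with its column-$k$ entries removed. That is, as sets,
\[B_{\pi,T_{\pi}}(i,j,k+1)=B_{\pi,T_{\pi}}(i,j,k)\setminus\{T_\pi^{-1}(k,r)\mid (k,r)\in D(\alpha_\pi)\}.\]
Iterating from $k=1$ then gives both claims of \Cref{lem:generalizedblockstuff} at once. Each step only removes elements, which yields the inclusion. Every removed element lies in some column $i'<k$, which yields the description of the difference.

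Fix $(i,j)$ and let $j'=\row_{\alpha_\pi}(\upof_{\alpha_\pi}(\leftof_{\alpha_\pi}(i,j)))$. When $j'>0$ we have $(i-1,j')\in D(\alpha_\pi)$, so $\alpha_{j'}\ge i-1$ and hence $(m,j')\in D(\alpha_\pi)$ for every $m\le i-1$. Also $(m,j)\in D(\alpha_\pi)$ for every $m\le i$. Thus both endpoints $T_\pi^{-1}(k,j')$ and $T_\pi^{-1}(k,j)$ exist for all $k\le i-1$.

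Now fix $k$ and set $p=\colpos_{\alpha_\pi}(k,j)$, and $p'=\colpos_{\alpha_\pi}(k,j')$ if $j'>0$, or $p'=0$ if $j'=0$. By the lemma relating the $\pi$-statistics to the $\alpha_\pi$-statistics, $T_\pi^{-1}(k,j)=c_{\pi,k,p}$ and $T_\pi^{-1}(k,j')=c_{\pi,k,p'}$. Observation (3) after the definition of blocks then identifies the level-$k$ block as
\[B_{\pi,T_\pi}(i,j,k)=b_{\pi,k,p'+1}\,c_{\pi,k,p'+1}\cdots b_{\pi,k,p}\,c_{\pi,k,p}.\]
Here $p'<p$ by \Cref{lem:row}, since $j'<j$.

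Next I pass to level $k+1$, which requires $k+1\le i-1$. For any row $r$ with $(k+1,r)\in D(\alpha_\pi)$, the definition of $\leftof_\pi$ gives
\[T_\pi^{-1}(k+1,r)=\max\bigl(b_{\pi,k,\colpos_\pi(T_\pi^{-1}(k,r))}\bigr).\]
Since the maximum of a block is the last entry of its image under $s$, this is the last entry of $s(b_{\pi,k,\colpos_\pi(T_\pi^{-1}(k,r))})$. By \Cref{lem:blocks_after_s}, the prefix of $s^k(\pi)$ before $0$ is $s(b_{\pi,k,1})\cdots s(b_{\pi,k,|C_{\pi,k}|})$. Applying this with $r=j$ and $r=j'$ shows that the slice defining the level-$(k+1)$ block is
\[B_{\pi,T_\pi}(i,j,k+1)=s(b_{\pi,k,p'+1})\cdots s(b_{\pi,k,p}).\]
When $j'=0$ this is the full prefix through $s(b_{\pi,k,p})$. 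As a set, this is the level-$k$ block with $c_{\pi,k,p'+1},\dots,c_{\pi,k,p}$ deleted, and those deleted elements are precisely its column-$k$ entries. Induction on $k$ finishes the proof.

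The main obstacle is the bookkeeping that justifies this identification. First, the row labels $j$ and $j'$ must correspond to column positions $p$ and $p'$ consistently at every level; this uses \Cref{lem:row} and the fact that $\leftof_\pi$ preserves rows. Second, the max of each block must land exactly at the right end of its sorted image, so that the slice endpoints are correct. Third, the degenerate case $j'=0$ must be handled uniformly with the case $j'>0$. I would treat $j'=0$ by setting $p'=0$ and reading the slice as a prefix.
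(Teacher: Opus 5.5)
Your proposal is correct and follows essentially the same argument as the paper. Both proceed by induction on $k$: the level-$k$ block is written as $b_{\pi,k,p'+1}c_{\pi,k,p'+1}\cdots b_{\pi,k,p}c_{\pi,k,p}$, and \Cref{lem:blocks_after_s} then identifies the next-level block as $s(b_{\pi,k,p'+1})\cdots s(b_{\pi,k,p})$, which differs from the level-$k$ block exactly by its column-$k$ entries. You additionally spell out why the slice endpoints line up (each block's maximum is the last entry of its image under $s$), a point the paper leaves implicit.
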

\begin{proof}
We induct on $k$. For $k=1$, the claim is trivial. Now suppose that the claim holds for $k-1\ge 1$. Let 
\[x := \begin{cases}\colpos_{\alpha_{\pi}}(k-1,j') &\text{if } j'>0 \\ 0 &\text{if } j'=0\end{cases}\]
and
\[y := \colpos_{\alpha_{\pi}}(k-1,j)\]
such that $j':=\row_{\alpha_{\pi}}(\upof_{\alpha_{\pi}}(\leftof_{\alpha_{\pi}}(i,j)))$.

If we express $B_{\pi,T_{\pi}}(i,j,k-1)$ in the form
\[b_{\pi,k-1,x+1}c_{\pi,k-1,x+1}\dots b_{\pi,k-1,y}c_{\pi,k-1,y},\]
then $B_{\pi,T_{\pi}}(i,j,k)$ becomes
\[s(b_{\pi,k-1,x+1})\dots s(b_{\pi,k-1,y})\]
by \Cref{lem:blocks_after_s}, from which we conclude that
\[B_{\pi,T_{\pi}}(i,j,k)\subseteq B_{\pi,T_{\pi}}(i,j,k-1)\]
and
\[B_{\pi,T_{\pi}}(i,j,k-1) \setminus B_{\pi,T_{\pi}}(i,j,k) \subseteq \{T_{\pi}^{-1}(i',j')\mid (i',j')\in D(\alpha_{\pi}), i'=k-1\}.\]
Together with the induction hypothesis, the result follows naturally. 
\end{proof}
Using \Cref{lem:generalizedblockstuff}, we provide an equivalent condition to $T=T_{\pi}$ for all $\alpha\in \mathcal{C}(n)$, $T\in \mathcal{L}(D(\alpha))$, and $\pi\in S_n'$.
\begin{lemma}\label{lem:newcrux}
For all $\alpha\in \mathcal{C}(n)$, $T\in \mathcal{L}(D(\alpha))$, and $\pi\in S_n'$, the equality $T=T_{\pi}$ holds if and only if $(T^{-1}(1,j))_{j=1}^{\ell(\alpha)}$ is a subsequence of $\pi$ and $T^{-1}(i,j)$ lies in $B_{\pi,T}(i,j,1)$ for all $(i,j)\in D(\alpha)$ with $i>1$.
\end{lemma}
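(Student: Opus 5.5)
We prove the two directions separately. In both, the guiding observation is that $B_{\pi,T}(i,j,1)$ is the window of $s^{i-2}(\pi)$ that becomes, after one more application of $s$, the block whose maximum is sent to the right of $0$ next to $T^{-1}(i-1,j)$.

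\emph{Forward direction.} Assume $T=T_\pi$. The entries $T_\pi^{-1}(1,j)=c_{\pi,1,j}$ are the first-column entries. By definition of $C_{\pi,1}$ they occur in $\pi$ in the order $j=1,\dots,|C_{\pi,1}|$, so they form a subsequence of $\pi$.

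Now fix $(i,j)\in D(\alpha_\pi)$ with $i>1$, and let $m:=T_\pi^{-1}(i,j)$. Then $\leftof_\pi(m)=T_\pi^{-1}(i-1,j)$. Moreover, $m=\max(b_{\pi,i-1,y})$, where $y=\colpos_\pi(T_\pi^{-1}(i-1,j))$. Using the preceding lemma, which translates $\upof$ and $\colpos$ into composition language, the generalized block $B_{\pi,T_\pi}(i,j,i-1)$ is exactly $b_{\pi,i-1,x+1}c_{\pi,i-1,x+1}\cdots b_{\pi,i-1,y}c_{\pi,i-1,y}$, where $x$ is as in the proof of \Cref{lem:generalizedblockstuff}. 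This window contains $b_{\pi,i-1,y}$, hence contains $m$. By \Cref{lem:generalizedblockstuff}, $B_{\pi,T_\pi}(i,j,i-1)\subseteq B_{\pi,T_\pi}(i,j,1)$, which gives the claim.

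\emph{Reverse direction.} Assume the two conditions hold. We show by induction on the column index $k$ that $T^{-1}(k,j)=T_\pi^{-1}(k,j)$ for every cell in column $k$, and that the column lengths of $\alpha$ and $\alpha_\pi$ agree.

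For $k=1$, the condition $T\in\mathcal{L}(D(\alpha))$ forces $T^{-1}(1,1)=n$. Each $T^{-1}(1,j)$ is larger than every entry in rows $\ge j$, so in particular it exceeds every first-column entry below it. Combined with the subsequence condition, this should force $T^{-1}(1,j)$ to be the maximum of the suffix of $\pi$ after $T^{-1}(1,j-1)$, at least among the entries not placed in later columns. This is the step we expect to be hardest, because cells in later columns and earlier rows are smaller and must be ruled out.

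We handle this by doing the induction over the whole linear extension, processing values in decreasing order $n,n-1,\dots$. Because $T$ and $T_\pi$ are both linear extensions, the largest value not yet placed must occupy a corner-type cell: either the next first-column cell or the next cell of some later column. The membership condition $T^{-1}(i,j)\in B_{\pi,T}(i,j,1)$, together with \Cref{lem:generalizedblockstuff}, then pins down which cell it is. Specifically, within the window $B_{\pi,T}(i,j,i-1)$ of $s^{i-2}(\pi)$, the entries outside $b_{\pi,i-1,y}$ are either earlier $c$'s already placed or lie in blocks whose maxima are smaller. So the maximal remaining value in the window equals $\max(b_{\pi,i-1,y})$, which is $T_\pi^{-1}(i,j)$.

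We run a strong induction simultaneously on values and on $s$-iterates: assuming $T$ and $T_\pi$ agree on all columns $<k$, the generalized blocks computed with $T$ and with $T_\pi$ coincide up to column $k$. Then \Cref{lem:blocks_after_s} and \Cref{left_well_defined} identify column $k$ of $T_\pi$ with the maxima of the relevant blocks, which the hypotheses force to equal column $k$ of $T$.

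Finally, both $T$ and $T_\pi$ are bijections (by the Corollary) onto diagrams with $n$ cells. Agreement on every cell of $D(\alpha)$ therefore forces $\alpha=\alpha_\pi$, and hence $T=T_\pi$.
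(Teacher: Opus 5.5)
Your forward direction is correct and is essentially the paper's. The converse, however, has a genuine gap. The step you flag as hardest, excluding entries that sit in a later column but an earlier row, is never carried out. The decreasing-value induction you switch to does not supply it: saying that the membership condition ``pins down'' which corner cell the next value occupies is the entire content of the converse, asserted without argument.

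The missing idea is to apply the membership hypothesis to the \emph{competing} entry's own cell. Because $B_{\pi,T}(i'',j'',1)$ is a window of $\pi$ ending at $T^{-1}(1,j'')$, every $T^{-1}(i'',j'')$ with $i''>1$ lies at or before $T^{-1}(1,j'')$ in $\pi$.

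For column $1$, this and the subsequence condition place every entry of rows $<j$ at or before $T^{-1}(1,j-1)$. So everything strictly between $T^{-1}(1,j-1)$ and $0$ lies in rows $\ge j$, and is at most $T^{-1}(1,j)$ by the linear-extension property. This yields $T^{-1}(1,j)=c_{\pi,1,j}$ and $|C_{\pi,1}|=\ell(\alpha)$.

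For $i>1$, suppose $i''\ge i$ and $j''<j$. Then $(i-1,j'')\in D(\alpha)$, so $j''\le j'$ where $(i-1,j')=\upof_{\alpha}(i-1,j)$. Hence $T^{-1}(i'',j'')$ lies at or before $T^{-1}(1,j')$, i.e.\ outside $B_{\pi,T}(i,j,1)$. This is the paper's observation that no $T^{-1}(i',j')$ with $i'\ge i-1$ and $j'<j$ lies in $B_{\pi,T}(i,j,1)$. Since $b_{\pi,i-1,y}\subseteq B_{\pi,T_\pi}(i,j,i-1)\subseteq B_{\pi,T_\pi}(i,j,1)=B_{\pi,T}(i,j,1)$, it forces every other entry of $b_{\pi,i-1,y}$ into a cell $(i'',j'')$ with $i''\ge i$ (columns $<i$ are already matched) and $j''\ge j$. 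Then $(i,j)\ge_D(i'',j'')$, so that entry is smaller than $T^{-1}(i,j)$.

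By contrast, your sentence that the window's entries outside $b_{\pi,i-1,y}$ are earlier $c$'s or lie in blocks with smaller maxima answers the wrong question. Once $T$ and $T_\pi$ agree on column $i-1$, the window $B_{\pi,T}(i,j,i-1)$ is exactly $b_{\pi,i-1,y}c_{\pi,i-1,y}$, because $\upof_{\alpha}(i-1,j)$ is the immediately preceding cell of column $i-1$. So there are no other blocks, and the window's largest entry besides $c_{\pi,i-1,y}$ is trivially $\max(b_{\pi,i-1,y})$. What has to be shown is that $T^{-1}(i,j)$ \emph{is} that maximum: no entry of the block may occupy a $T$-cell incomparable with $(i,j)$ and exceed $T^{-1}(i,j)$. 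Only the exclusion above does this.

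You should also make explicit how $T^{-1}(i,j)$ gets from $B_{\pi,T}(i,j,1)$ into $B_{\pi,T}(i,j,i-1)$. The induction hypothesis on columns $<i$ gives $B_{\pi,T}(i,j,1)=B_{\pi,T_\pi}(i,j,1)$. The second inclusion of \Cref{lem:generalizedblockstuff} then shows that the discarded entries lie in columns $<i-1$, which $T^{-1}(i,j)$ does not.

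Finally, commit to one induction; the paper uses column-then-row order on $D(\alpha)$. As written, you alternate between induction on columns, on decreasing values, and ``simultaneously on values and $s$-iterates'' without making any of them precise.
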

\begin{proof}
If $T=T_{\pi}$, then $(T^{-1}(1,j))_{j=1}^{\ell(\alpha)}$ is equal to $C_{\pi, 1}$, which is a subsequence of $\pi$. Moreover, by \Cref{lem:generalizedblockstuff},
\[T_{\pi}^{-1}(i,j)\in B_{\pi,T_{\pi}}(i,j,i-1)\subseteq B_{\pi,T_{\pi}}(i,j,1).\]
for all $(i,j)\in D(\alpha)$ satisfying $i>1$.

Conversely, if $(T^{-1}(1,j))_{j=1}^{\ell(\alpha)}$ is a subsequence of $\pi$ and $T^{-1}(i,j)\in B_{\pi,T}(i,j,1)$ for all $(i,j)\in D(\alpha)$ with $i>1$, we prove that $T^{-1}(i,j)=T^{-1}_{\pi}(i,j)$ by strong induction on $(i,j)$ in the poset $(D(\alpha),\ge_{D})$. For the maximal element $(i,j)=(1,1)$, we have 
\[T^{-1}(1,1)=n=T_{\pi}^{-1}(1,1).\]
Now assume that the claim holds for all $(k,\ell)\in D(\alpha)$ such that either $k<i$ or $k=i$ and $\ell < j$.  If $i=1$, then every element of $\{T^{-1}(i',j')\mid (i',j')\in D(\alpha), j'<j\}$ is to the left of $T^{-1}(1,j-1)$ in $\pi$. Therefore, $T^{-1}(1,j)$ is the maximal element between $T^{-1}(1,j-1)=c_{\pi,1,j-1}$ and $0$ in $\pi$, which is precisely the definition of $c_{\pi,1,j}=T_{\pi}^{-1}(1,j)$. If $i>1$, then first note that
\[T^{-1}(i,j) \in B_{\pi,T_{\pi}}(i,j,i-1)\]
by the induction hypothesis and \Cref{lem:generalizedblockstuff} since
\[T^{-1}(i,j)\in B_{\pi,T}(i,j,1) = B_{\pi,T_{\pi}}(i,j,1)\]
and
\begin{align*}
B_{\pi,T_{\pi}}(i,j,1) \setminus B_{\pi,T_{\pi}}(i,j,i-1)&\subseteq \{T_{\pi}^{-1}(i',j')\mid (i',j')\in D(\alpha_{\pi}), i'<i-1\} \\
&\subseteq \{T^{-1}(i',j')\mid (i',j')\in D(\alpha), i'<i-1\}.
\end{align*}
Further, $T^{-1}(i,j)$ is the greatest element of $B_{\pi,T_{\pi}}(i,j,1)$ besides $T^{-1}(i-1,j)$ because no element of $\{T^{-1}(i',j')\mid (i',j')\in D(\alpha), i'\ge i-1,j'<j\}$ is in $B_{\pi,T}(i,j,1)$ and, consequently, $B_{\pi,T}(i,j,i-1)$. This finally implies that $T^{-1}(i,j)=\max(b_{\pi,i-1,\colpos(T^{-1}_{\pi}(i-1,j))})$ and \[\leftof_{\pi}(T^{-1}(i,j))=T^{-1}_{\pi}(i-1,j)=\leftof_{\pi}(T^{-1}_{\pi}(i,j)).\]
\end{proof}
We now introduce the hook length function for stack-sorting tableaux. 
\begin{definition}
For every composition $\alpha\in\mathcal{C}(n)$, define the \emph{hook length} function $h_{\alpha}:\mathbb{N}^2\to \mathbb{N}$ such that
\[h_{\alpha}(i,j):=\left|\left\{(i',j')\in D(\alpha)\mid i'\le i-1,\row_{\alpha}(\upof_{\alpha}(i-1,j))<j'\le j\right\}\right|\]
if $i>1$ and
\[h_{\alpha}(i,j):=1\]
otherwise for all $(i,j)\in \mathbb{N}^2$.
\end{definition}
\begin{remark}\label{rem:hook}
Note that we may alternatively express
\[h_{\alpha}(i,j)=\begin{cases}\sum_{j'=\row_{\alpha}(\upof_{\alpha}(i-1,j))+1}^{j-1} \alpha_{j'} + \min(i-1,\alpha_j) & \text{if } i>1 \\ 1 &\text{if }i=1\end{cases}\]
because every row between $\row_{\alpha}(\upof_{\alpha}(i-1,j))+1$ and $j-1$ in $\alpha$ has length less than $i-1$ for $i>1$.
\end{remark}
\begin{example}
Consider the composition $\alpha=(3,2,1,4)$. We label each cell $(i,j)\in D(\alpha)$ with $h_{\alpha}(i,j)$.
\[\ytableaushort{{1}{1}{2},{1}{1},{1},{1}{1}{3}{6}}\]
\end{example}
\begin{definition}
For every subset $S\subseteq \{(\alpha,T)\mid \alpha\in \mathcal{C}(n),T\in \mathcal{L}(D(\alpha))\}$, let $\mathcal{P}(S)$ denote the set of all permutations $\pi\in S_n'$ that satisfy $(\alpha_{\pi},T_{\pi})\in S$.
\end{definition}
\begin{theorem}\label{thm:PT}
For all $\alpha\in\mathcal{C}(n)$ and $T\in\mathcal{L}(D(\alpha))$,
\[|\mathcal{P}(\{(\alpha,T)\})| = \prod_{(i,j)\in D(\alpha)}h_{\alpha}(i,j).\]
\end{theorem}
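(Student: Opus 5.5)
Using \Cref{lem:newcrux}, $\mathcal{P}(\{(\alpha,T)\})$ is exactly the set of $\pi\in S_n'$ such that $(T^{-1}(1,j))_{j=1}^{\ell(\alpha)}$ appears in $\pi$ as a subsequence and $T^{-1}(i,j)\in B_{\pi,T}(i,j,1)$ for every $(i,j)\in D(\alpha)$ with $i>1$. The plan is to build every such $\pi$ by inserting the entries of $[n]$ one at a time, in the order $T^{-1}(1,1)=n, n-1,\dots,1$, into a word that initially is just $0$. At each insertion we count the positions that keep the eventual conditions satisfiable, and we aim to show this count equals $h_\alpha(T(m))$ no matter which earlier choices were made. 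Taking the product over all cells then gives the formula.

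\textbf{First-column entries.} Each $T^{-1}(1,j)$ contributes $h_\alpha(1,j)=1$. Inserting entries in decreasing order, $T^{-1}(1,j)$ should be placed immediately to the left of $0$ relative to the entries already present. We then check that the right-to-left maxima structure (the condition that $T^{-1}(1,j)$ is the maximum between $T^{-1}(1,j-1)$ and $0$) forces this position: every larger entry in rows $<j$ already lies to the left of $T^{-1}(1,j-1)$, and every larger entry in rows $\ge j$ in later columns must lie inside a block to the left. Since $T$ is a linear extension, the larger entries in rows $>j$ are exactly the ones that $T$ places in column $1$ above row $j$. So there is exactly one valid position.

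\textbf{Entries with $i>1$.} Set $m=T^{-1}(i,j)$ and $j'=\row_\alpha(\upof_\alpha(i-1,j))$. By definition, $B_{\pi,T}(i,j,1)$ is the segment of $\pi$ running from just after $T^{-1}(1,j')$ (or from the start of $\pi$ if $j'=0$) up to and including $T^{-1}(1,j)$. The membership condition says $m$ must be inserted strictly inside this segment, i.e.\ before $T^{-1}(1,j)$. When $m$ is inserted, the segment contains exactly the already-inserted larger entries whose cells lie in rows $j'+1,\dots,j$. The first key claim is that these are precisely the cells $(i',j'')$ with $i'\le i-1$ and $j'<j''\le j$, together with any larger cells in column $\ge i$ of those rows. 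Cells in column $\ge i$ of rows strictly between $j'$ and $j$ do not exist, because those rows have length $<i-1$ (\Cref{rem:hook}). In row $j$, the cells in columns $\ge i$ hold smaller values and are not yet inserted, since $T$ is a linear extension. So the segment holds exactly $h_\alpha(i,j)$ entries, and inserting $m$ into it gives $h_\alpha(i,j)$ gaps: before each of the $h_\alpha(i,j)$ entries of the segment. Gaps after $T^{-1}(1,j)$ are excluded.

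\textbf{Main obstacle.} Because $B_{\pi,T}$ is defined through a final $\pi$, the hardest step is showing that these choices are independent and consistent. Concretely, we must show that later insertions of smaller elements never move an already-placed larger element out of its required segment. We also need the segments for later cells to depend only on the relative order of first-column entries, which is fixed. The first point holds because segments are delimited by first-column entries, whose relative positions never change. The second point holds because the condition on $m$ involves only the position of $m$ relative to $T^{-1}(1,j')$ and $T^{-1}(1,j)$. With both in hand, the insertion process is a bijection between $\mathcal{P}(\{(\alpha,T)\})$ and sequences of gap choices, so $|\mathcal{P}(\{(\alpha,T)\})|=\prod_{(i,j)\in D(\alpha)} h_\alpha(i,j)$.
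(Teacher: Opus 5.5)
Your argument is correct. It uses the same ingredients as the paper, namely \Cref{lem:newcrux} plus counting the admissible slots inside $B_{\pi,T}(i,j,1)$, but it organizes them differently.

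The paper inducts on $n$. It deletes the entry of a minimal cell $(i,j)$ with $i>1$, so $\alpha_j=i$ and all later rows are shorter than $i$. It then passes to $(\alpha^{-j},T|_{D(\alpha)\setminus\{(i,j)\}})$ and reinserts the deleted entry in $h_\alpha(i,j)$ ways, with base case $w(\alpha)=1$.

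You instead keep the shape fixed and insert $n,n-1,\dots,1$ in turn. The filled cells then always form an up-set of $D(\alpha)$, and first-column cells are absorbed uniformly with factor $1$, so no base case is needed.

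The two routes trade off as follows:
\begin{itemize}
\item The paper's choice makes the slot count immediate, because minimality forces every other entry of rows $j'+1,\dots,j$ into the hook. But it tacitly requires that deleting $(i,j)$ changes no other cell's $j'$ or hook length, and it needs a restandardization when $T^{-1}(i,j)\neq 1$.
\item Yours avoids shape changes entirely. The cost is that the weight falls on the claim that, at insertion time, the segment holds exactly the hook, and you state this a little quickly.
\end{itemize}

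To close that step, argue as follows:
\begin{itemize}
\item Every hook cell is $\ge_D(i,j)$, so it is already placed.
\item Any larger entry in a row $j''>j$ sits in a column $i''<i$. If $i''=1$, it follows $T^{-1}(1,j)$ by the subsequence condition. Otherwise $\alpha_j\ge i>i''-1$ forces $\row_{\alpha}(\upof_{\alpha}(i''-1,j''))\ge j$, so the entry again lies after $T^{-1}(1,j)$.
\item Entries of rows $j'+1,\dots,j$ stay after $T^{-1}(1,j')$ because $\alpha_{j'}\ge i-1$.
\item Entries of rows $\le j'$ end at or before $T^{-1}(1,j')$.
\end{itemize}

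Finally, in the first-column paragraph, the sentence about ``larger entries in rows $>j$'' does not make sense and should be dropped. Replace it with the simpler fact that every cell in a row $\ge j$ is $\le_D(1,j)$. Hence all entries larger than $T^{-1}(1,j)$ lie in rows $<j$, and so at or before $T^{-1}(1,j-1)$. That is why the slot immediately to the left of $0$ is the unique admissible one.
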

\begin{proof}
We induct on $n$. If $w(\alpha)=1$, then $\pi$ must be strictly decreasing by \Cref{lem:newcrux}, so there is only one permutation in $\mathcal{P}(\{(\alpha,T)\})$, which agrees with the calculation
\[\prod_{(i,j)\in D(\alpha)}h_{\alpha}(i,j)=\prod_{(i,j)\in D(\alpha)}1=1.\] Now assume that $w(\alpha)>1$ and the claim holds for $n-1\ge 1$. In particular, consider the linear extension $T|_{D(\alpha)\setminus \{(i,j)\}}$ where $(i,j)$ is a minimal element of $D(\alpha)$ and $i>1$. For all $\pi\in S_n'$, observe that $\pi\in\mathcal{P}(\{\alpha,T\})$ if and only if $T^{-1}(i,j)\in B_{\pi,T}(i,j,1)$ and 
\[\pi\setminus \{T^{-1}(i,j)\}\in \mathcal{P}(\{(\alpha^{-j},T|_{D(\alpha)\setminus (i,j)})\})\]
by \Cref{lem:newcrux}. Therefore, we may obtain the set $\mathcal{P}(\{\alpha, T\})$ by inserting the element $T^{-1}(i,j)$ into $B_{\pi,T}(i,j,1)$ for each permutation $\pi'\in\mathcal{P}(\{(\alpha^{-j},T|_{D(\alpha)\setminus (i,j)})\})$. There are always
\begin{align*}
|B_{\pi,T}(i,j,1)| &=|\{(i',j')\in D(\alpha_{\pi})\mid i'\le i-1,\row_{\alpha}(\upof_{\alpha}(i-1,j))<j'\le j\}| \\
&=h_{\alpha}(i,j)
\end{align*}
possible positions to insert $T^{-1}(i,j)$, so we conclude that
\begin{align*}
|\mathcal{P}(\{(\alpha,T)\})| &= h_{\alpha}(i,j)\cdot |\mathcal{P}(\{(\alpha^{-j},T|_{D(\alpha)\setminus (i,j)})\})| \\
&=\prod_{(i,j)\in D(\alpha)}h_{\alpha}(i,j).
\end{align*}
\end{proof}
\begin{corollary}
For all $\alpha\in\mathcal{C}(n)$,
\[|\mathcal{P}(\{(\alpha,T):T\in\mathcal{L}(D(\alpha))\})|=|\mathcal{L}(D(\alpha))|\cdot\prod_{(i,j)\in D(\alpha)}h_{\alpha}(i,j).\]
\end{corollary}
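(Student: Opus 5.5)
The corollary follows by summing \Cref{thm:PT} over all linear extensions. The plan has three steps.

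First, I will check that the sets $\mathcal{P}(\{(\alpha,T)\})$ for distinct $T\in\mathcal{L}(D(\alpha))$ are pairwise disjoint. Each $\pi\in S_n'$ determines a unique pair $(\alpha_\pi,T_\pi)$. So a permutation lying in both $\mathcal{P}(\{(\alpha,T)\})$ and $\mathcal{P}(\{(\alpha,T')\})$ would satisfy $T=T_\pi=T'$. This gives
\[\mathcal{P}(\{(\alpha,T):T\in\mathcal{L}(D(\alpha))\})=\bigsqcup_{T\in\mathcal{L}(D(\alpha))}\mathcal{P}(\{(\alpha,T)\}).\]

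Second, I take cardinalities. By \Cref{thm:PT}, each summand has size $\prod_{(i,j)\in D(\alpha)}h_{\alpha}(i,j)$. The hook length $h_\alpha$ depends only on $\alpha$ and not on $T$, so the summands all have the same size. Summing over the $|\mathcal{L}(D(\alpha))|$ choices of $T$ then gives the product formula.

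The only point needing care is the well-definedness of the map $\pi\mapsto(\alpha_\pi,T_\pi)$. This means checking that $T_\pi$ really is a linear extension of $D(\alpha_\pi)$, which was established in the theorem and corollary above showing that $T_\pi$ is an order-respecting bijection. Beyond that, there is no genuine obstacle: the corollary is a direct summation of \Cref{thm:PT}.
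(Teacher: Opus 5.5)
Your proposal is correct and is essentially the paper's argument: the paper states this corollary as an immediate consequence of \Cref{thm:PT}. Summing over the disjoint sets $\mathcal{P}(\{(\alpha,T)\})$ (disjoint because each $\pi$ determines $T_\pi$) gives the count, since each set has the same size $\prod_{(i,j)\in D(\alpha)}h_\alpha(i,j)$, independent of $T$. (A minor point: disjointness only uses that $T_\pi$ is a well-defined function of $\pi$, while the fact that $T_\pi$ is a linear extension only ensures the union covers every $\pi$ with $\alpha_\pi=\alpha$, which the stated count does not require.)
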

\begin{definition}
Let $m,w,w_{\ell}\in\mathbb{N}\cup\{0\}$. For all sequences $h=(h_i)_{i=1}^w$ and $p=(p_i)_{i=1}^w$, define
\[S(m,w,w_{\ell},h,p):=\left\{(\alpha, T)\middle| \begin{aligned}&\alpha\in\mathcal{C}(m),T\in\mathcal{L}(D(\alpha)),w=w(\alpha),w_{\ell}=\alpha_{\ell(\alpha)},\\&h=(h_{\alpha}(i+1,\ell(\alpha)))_{i=1}^w,\\& p=(T^{-1}(\upof_{\alpha}(i,\ell(\alpha)+1)))_{i=1}^{w}\end{aligned}\right\}.\]
\end{definition}
Here, $h$ tracks the hook lengths of the cells on the last row, while $p$ tracks the minimal entry for each column.
\begin{definition}
Let $n,t\in \mathbb{N}$. Define the set of states
\[\mathbb{S}(n,t):=\left\{(m,w,w_{\ell},h,p)\mid m\le n, w_{\ell}\le w\le t, h\in[m]^{w},p\in[m]^{w}\right\}.\]
\end{definition}
\begin{definition}
For every state $(m,w,w_{\ell},h,p)\in\mathbb{S}(n,t)$ and integer $i\in [t]$, 
\begin{itemize}
\item let $w^i:=\max(w,i)$,
\item let $h^{i}=(h_j^i)_{j=1}^{w^i}$ such that
\[h^i_j:=\begin{cases}\min(j,i) & \text{if } 1\le j\le w_{\ell}\\ h_{j}+\min(j,i)&\text{if } w_{\ell}<j\le w \\ m+j&\text{if } w<j\le w^i\end{cases}\]
for all $j\in[w^i]$, and
\item let $P^i$ be the set of all $(p_j^i)_{j=1}^{w^i}\in [m+i]^{w^i}$ satisfying \begin{itemize}\item $p_1^i > p_2^i>\dots>p_i^i$,\item $p_j^i<p_j+|\{k\in [i]: p_k^i\le p_j+i-k\}|$ for all $j\le \min(w,i)$, \item $p_j^i=p_j+|\{k\in [i]: p_k^i\le p_j+i-k\}|$ for all $i<j\le w$. \end{itemize}
\end{itemize}
We define the set of successors and predecessors of $(m,w,w_{\ell},h,p)$ by
\[\successor(m,w,w_{\ell},h,p) := \left\{(m+i,w^i,i,h^i,p^i)\mid i\in[t],p^i\in P^i\right\}\]
and
\[\pred(m,w,w_{\ell},h,p) := \{x\in \mathbb{S}(n,t)\mid (m,w,w_{\ell},h,p)\in \successor(x)\}\]
respectively.
\end{definition}
\begin{example}
For the state \[x=(6,3,1,(1,1,3),(2,3,1))\in \mathbb{S}(100,4)\] and $i=4$, we have 
\begin{itemize}
\item $w^i=4$, 
\item $h^i=(1,3,6,10)$, and 
\item $(5,4,2,1)\in P^i$.
\end{itemize}
\end{example}
\begin{remark}
For all $(\alpha,T)\in S(m,w,w_{\ell},h,p)$, note that the set $P^i$ characterizes all sequences $(p_{j}^i)_{j=1}^{w^i}$ for which inserting $(j,\ell(\alpha)+1)$ into position $p_j^i$ of $T$ for all $j\in [i]$ yields a new linear extension \[T^i:[n+i]\to D(\alpha i)\]
such that the updated position of $(j,\ell(\alpha))$ in $T^i$ is $p_j^i$ for all $i< j\le w$. Further, by \Cref{rem:hook}, note that \[h^i=(h_{\alpha i}(i+1,\ell(\alpha i)))_{i=1}^{w^i}.\]
Therefore, we observe that the set $\cup_{x\in \pred(n,w,w_{\ell},h,p)} S(x)$ consists of all linear extensions derived by removing the last row from each $(T,\alpha)\in S(m,w,w_{\ell},h,p)$.
\end{remark}
\begin{lemma}\label{lem:recursion}
For all $(m,w,w_{\ell},h,p)\in \mathbb{S}(n,t)$ such that $m\ge 1$,
\[|\mathcal{P}(S(m,w,w_{\ell},h,p))|=\sum_{x\in \pred(m,w,w_{\ell},h,p)}|\mathcal{P}(S(x))|\cdot \prod_{i=1}^{w_{\ell}-1}h_i.\]
\end{lemma}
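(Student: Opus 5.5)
We plan to decompose $|\mathcal{P}(S(m,w,w_{\ell},h,p))|$ by removing the last row of the composition. By the Corollary to \Cref{thm:PT},
\[
|\mathcal{P}(S(m,w,w_{\ell},h,p))| = \sum_{(\alpha,T)\in S(m,w,w_{\ell},h,p)} \prod_{(i,j)\in D(\alpha)} h_{\alpha}(i,j).
\]

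The first step is to split each summand. The product separates into the cells of the first $\ell(\alpha)-1$ rows and the $w_{\ell}$ cells of the last row. For the earlier rows we use that $h_{\alpha}(i,j)$ with $j<\ell(\alpha)$ depends only on $\alpha_1,\dots,\alpha_j$ and on $\upof_{\alpha}(i-1,j)$. Since $\upof_\alpha$ only looks at rows with smaller index, these factors equal $h_{\alpha^-}(i,j)$. For the last row, the cell $(1,\ell(\alpha))$ contributes $1$, and the cell $(i+1,\ell(\alpha))$ contributes exactly $h_i$ by the definition of $S$ (that $h$ records $h_{\alpha}(i+1,\ell(\alpha))$). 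Hence each summand equals
\[
\prod_{(i,j)\in D(\alpha^-)} h_{\alpha^-}(i,j)\cdot \prod_{i=1}^{w_{\ell}-1}h_i,
\]
and the second factor is constant over the whole set $S(m,w,w_{\ell},h,p)$.

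The second step is to regroup the sum according to the restricted pair $(\alpha^-,T|_{D(\alpha^-)})$, restandardized to values in $[m-w_{\ell}]$. This pair lies in $S(x)$ for a unique state $x$. The Remark following the definition of successors says that the states $x\in\pred(m,w,w_{\ell},h,p)$ are exactly those whose sets $S(x)$ arise by removing the last row from the elements of $S(m,w,w_{\ell},h,p)$. We need this correspondence to be a bijection: each pair $(\alpha',T')\in S(x)$ with $x\in\pred(\cdot)$ should extend in exactly one way to an element of $S(m,w,w_{\ell},h,p)$. The argument is that the insertion positions of the new row's cells $(j,\ell(\alpha)+1)$ are pinned down by $p$, and the updated column-minimum positions are forced by the defining equations of $P^i$. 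Distinct predecessor states have disjoint $S(x)$, since a pair determines its state. Summing the first factor over $S(x)$ then gives $|\mathcal{P}(S(x))|$ by the Corollary again, and this yields the claimed identity.

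The main obstacle is making that correspondence rigorous, i.e.\ proving the Remark's characterization of $P^i$. Concretely, we must show three things:
\begin{enumerate}
\item Inserting the new last-row entries at positions $p^i_1>\dots>p^i_i$ yields a linear extension exactly when each $p^i_j$ is smaller than the shifted position of the column-$j$ minimum. This is what makes $(j,\ell)$ sit above the new cell.
\item The shift count $|\{k: p^i_k\le p_j+i-k\}|$ correctly computes the new position of the old column minimum.
\item No two distinct predecessor data produce the same extended pair.
\end{enumerate}
We would verify these by tracking relative orders in the word $T^{-1}(1),\dots,T^{-1}(m)$ under insertion of the $i$ new labels. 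We would also check the degenerate columns $j>w$, which have $h^i_j=m+j$ and no old minimum. Since $m\ge1$, the predecessor set is nonempty whenever $S$ is nonempty, which makes the sum meaningful.
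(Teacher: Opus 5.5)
Your proposal is correct and matches the paper's proof. Like the paper, you factor the hook-length product from \Cref{thm:PT} into the $D(\alpha^-)$ part, which is unchanged because $h_\alpha(i,j)$ and $\upof_\alpha$ only see rows up to $j$, times the constant last-row factor $\prod_{i=1}^{w_\ell-1}h_i$. You then sum and regroup by removing the last row, using the Remark relating $P^i$ and $\pred$.

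The paper simply assumes that Remark's bijection, whereas you flag it as needing proof. If you carry out that check, add to your three items the verification that the extended pair's last-row hook data equals $h^i$; this follows from \Cref{rem:hook}.
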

\begin{proof}
Fix $(\alpha,T)\in S(m,w,w_{\ell},h,p)$. By \Cref{thm:PT},
\begin{align*}
|\mathcal{P}(\{(\alpha,T)\})| &= \prod_{(i,j)\in D(\alpha)}h_\alpha(i,j) \\
&= \prod_{(i,j)\in D(\alpha^-)}h_{\alpha^-}(i,j)\cdot \prod_{i=1}^{w_{\ell}}h_{\alpha}(i,\ell(\alpha)) \\
&= \prod_{(i,j)\in D(\alpha^-)}h_{\alpha^-}(i,j)\cdot \prod_{i=1}^{w_{\ell}-1}h_i \\
&= |\mathcal{P}(\{(\alpha^-,T|_{D(\alpha^-)})\})|\cdot \prod_{i=1}^{w_{\ell}-1}h_i.
\end{align*}
Summing over all $(\alpha,T)\in S(m,w,w_{\ell},h,p)$, we obtain the desired result.
\end{proof}
\begin{proof}[Proof of \Cref{thm:result}]
We first present a forward dynamic programming algorithm that computes $|\mathcal{P}(S(x))|$ for all $x\in \mathbb{S}(n,t)$. First, initialize $\dynp(x)=1$ if $x=(0,0,0,(),())$ and $\dynp(x)=0$ otherwise for all $x\in \mathbb{S}(n,t)$. Then, iterating over all $m\in \{0,1,\dots,n-1\}$, we update 
\[\dynp(x')\leftarrow \dynp(x') + \dynp(x) \cdot \prod_{j=1}^{i-1} h_j^i\]
for all $x=(m,w,w_{\ell},h,p)\in\mathbb{S}(n,t)$ and $x'=(m+i,w^i,i,h^i,p^i)\in \successor(x)$. When the algorithm concludes, \[\dynp(x)=|\mathcal{P}(S(x))|\] by \Cref{lem:recursion}. Each update requires $O(1)$ time. There are \[|\mathbb{S}(n,t)|=O(n\cdot t\cdot t\cdot n^t\cdot n^t)=O(n^{2t+1})\] total states, and for each state $x\in \mathbb{S}(n,t)$, the set of successors $\successor(x)$ is computable in $O(n^t)$ time. Therefore, the overall time complexity is $O(n^{3t+1})$. Then, to compute $|\mathcal{W}_t'(n)|$, it suffices to simply sum
\[|\mathcal{W}_t'(n)|=\sum_{(n,w,w_{\ell},h,p)\in\mathbb{S}(n,t)} |\mathcal{P}(S(n,w,w_{\ell},h,p))|\]
since a permutation $\pi\in S_n'$ is $t$-stack-sortable if and only if $w(\alpha_{\pi})\le t$.
\end{proof}
\section{Stack-Sorting on $S_n$}
We now shift our attention to proving \Cref{thm:extensiontoSn}. The following lemma is partially adapted from Lemma 3.1 of Chen, Luo, and Zhang \cite{words} to the setting of $s$.
\begin{lemma}\label{lem:increasingtail}
Let $\pi\in S_n$ and $k\ge 0$. The permutation $s^k(\pi)[\ind_{s^k(\pi)}(\pi_n)+1:]$ is sorted, and its elements are all greater than $\pi_n$.
\end{lemma}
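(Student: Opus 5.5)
We argue by induction on $k$, proving the slightly stronger claim that for every $k\ge 0$ the suffix $s^k(\pi)[\ind_{s^k(\pi)}(\pi_n)+1:]$ is increasing and consists only of elements greater than $\pi_n$. For $k=0$ the suffix after $\pi_n$ in $\pi$ is empty, so the claim is vacuous.

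For the inductive step, write $\sigma:=s^k(\pi)=\rho\,\pi_n\,\tau$, where by hypothesis $\tau$ is increasing and every entry of $\tau$ exceeds $\pi_n$. Apply $s$ to $\sigma$ and track the moment just before $\pi_n$ is read.

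\begin{itemize}
\item At that moment the stack holds some entries of $\rho$ in increasing order from top to bottom.
\item To push $\pi_n$, the stack pops every entry smaller than $\pi_n$. These popped entries land to the left of $\pi_n$ in the output.
\item Then $\pi_n$ is pushed, so the stack now consists of $\pi_n$ on top of a stack $S$ whose entries are all greater than $\pi_n$, increasing downward.
\item Next the entries of $\tau$ are read in increasing order. When $\tau_1>\pi_n$ arrives, $\pi_n$ is popped, and then every element of $S$ smaller than $\tau_1$ is popped; the same happens for each later $\tau_j$.
\end{itemize}

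The key observation is that the output after $\pi_n$ is produced by merging two increasing sequences, $\tau$ and the contents of $S$ read from top to bottom. The stack discipline outputs the smaller available element first: an element of $S$ is popped exactly when the incoming element of $\tau$ exceeds it, and each $\tau_j$ is itself popped before any larger $\tau_{j'}$ arrives. Concretely, since $\tau$ is increasing, each $\tau_j$ is pushed onto the top and is popped as soon as $\tau_{j+1}>\tau_j$ arrives, together with every remaining element of $S$ below $\tau_{j+1}$. At the end, the leftover elements of $S$ are flushed in increasing order.

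It follows that everything output after $\pi_n$ is the sorted union of $S$ and $\tau$. All of these elements exceed $\pi_n$, since entries of $S$ are greater than $\pi_n$ by construction and entries of $\tau$ by hypothesis. This completes the induction.

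The main obstacle is the merge claim: that the pops interleave $S$ and $\tau$ correctly. This needs the invariant that the stack always reads $\tau_j$ on top of the remaining elements of $S$, all of which are greater than $\tau_j$. One checks this invariant directly from the push/pop rule, using that every entry below the top is larger than the top and that $\tau$ is increasing.
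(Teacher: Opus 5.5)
Your proof is correct and takes essentially the same route as the paper. Both argue by induction on $k$: once $\pi_n$ leaves the stack, the remaining stack contents and the remaining input $\tau$ are both increasing with every entry exceeding $\pi_n$, so the rest of that pass of $s$ merges them in sorted order. The only difference is cosmetic: you fix the configuration just after $\pi_n$ is pushed, whereas the paper fixes it just after $\pi_n$ is popped.
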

\begin{proof}
We induct on $k$. For $k= 0$, the permutation $\pi[\ind_{\pi}(\pi_n)+1:]$ is empty, so the claim is vacuously true. Now assume that the claim holds for $k-1\ge 0$. Consider the moment immediately after $\pi_n$ is popped from the stack when $s$ is applied to $s^{k-1}(\pi)$. Let the remainder of the input permutation be $r = r_1r_2\dots r_i$, and let the current elements of the stack from top to bottom be $t=t_1t_2\dots t_j$. By the induction hypothesis, $r$ is sorted, and its elements are all greater than $\pi_n$. Furthermore, because the stack always maintains a strictly increasing order and because $\pi_n$ was just popped out of the stack, $t$ must be sorted, and its elements are also all greater than $\pi_n$.

Now, if $t_1 < r_1$, then $t_1$ is popped out. If $t_1 > r_1$, then $r_1$ is pushed into the stack and popped out immediately. In either case, the least element is always appended to the output permutation. Thus, after $i+j$ iterations, the subpermutation of $s^k(\pi)$ to the right of $\pi_n$ will be sorted, and its elements are all greater than $\pi_n$.
\end{proof}
\begin{lemma}\label{lem:excludinglast}
Let $\pi\in S_n$ and $k\ge 0$. The permutation $s^k(\pi[:n])$ occurs as a subpermutation of $s^k(\pi)$.
\end{lemma}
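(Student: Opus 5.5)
We prove the statement by induction on $k$. The underlying fact is a standard property of $s$: deleting an entry from the input of the stack-sorting map deletes that entry from the output and leaves the relative order of everything else unchanged. For $k=0$ there is nothing to prove, since $\pi[:n]$ is a prefix and hence a subpermutation of $\pi$.

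For the inductive step it suffices to show a slightly more general claim. Let $\sigma$ be any permutation (of distinct integers) and $x$ an entry of $\sigma$. Write $\sigma\setminus x$ for $\sigma$ with $x$ deleted. Then
\[s(\sigma\setminus x)=s(\sigma)\setminus x.\]
By the induction hypothesis, $s^{k-1}(\pi[:n])=s^{k-1}(\pi)\setminus\pi_n$. Applying the claim with $\sigma=s^{k-1}(\pi)$ and $x=\pi_n$ gives $s^k(\pi[:n])=s^k(\pi)\setminus \pi_n$. So $s^k(\pi[:n])$ occurs as a subpermutation of $s^k(\pi)$.

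To prove the claim, we use the recursive description of $s$. Write $\sigma=L\,M\,R$, where $M=\max(\sigma)$. Then $s(\sigma)=s(L)\,s(R)\,M$, because $M$ stays in the stack until everything after it has been processed, and nothing in $L$ survives in the stack past the push of $M$. We induct on $|\sigma|$ and split into cases according to where $x$ lies.
\begin{itemize}
\item If $x=M$, then $\sigma\setminus x=LR$. We need $s(LR)=s(L)s(R)$, which is not automatic. Here we should instead rely on a direct simulation rather than the decomposition; alternatively we use the known formula $s(\sigma)$ is determined by the tree/pop comparisons.
\item If $x\in L$ or $x\in R$, the maximum of $\sigma\setminus x$ is still $M$. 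The decomposition and the inductive hypothesis on $L$ or $R$ then finish the case directly.
\end{itemize}

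The main obstacle is the case $x=M$. Here the claim is in fact false in general: for example $s(213)=123$ while $s(21)\,s(3)=123$ works, but $s(132)=123$ while $s(12)=12$. More relevantly, deleting a large element can change the output order. For instance $\sigma=231$ gives $s(\sigma)=213$, while deleting $3$ gives $s(21)=12$, and $12$ is not a subsequence of $213$. So the general claim must be restricted. What we really need is the special case where $x$ is the \emph{last} entry of $\pi$, tracked through the iterates. Our fallback plan is therefore to simulate $s$ on $s^{k-1}(\pi)$ and $s^{k-1}(\pi[:n])=s^{k-1}(\pi)\setminus\pi_n$ simultaneously, and to use \Cref{lem:increasingtail}. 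That lemma says that everything to the right of $\pi_n$ in $s^{k-1}(\pi)$ is increasing and exceeds $\pi_n$. Before $\pi_n$ is read, the two runs coincide. After $\pi_n$ is read, pushing $\pi_n$ only pops entries smaller than $\pi_n$. Those entries would have been popped anyway before any remaining (larger) tail entry was pushed, because the tail is increasing and larger than $\pi_n$. So apart from $\pi_n$ itself, the sequences of popped entries agree in order. Making this last comparison rigorous, by showing the pop order of the non-$\pi_n$ entries is identical, is the step I expect to take the most care.
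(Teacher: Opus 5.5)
Your fallback plan is correct and is essentially the paper's proof: run $s$ in parallel on $s^{k-1}(\pi)$ and on $s^{k-1}(\pi)$ with $\pi_n$ deleted, and use \Cref{lem:increasingtail} to see that the entries popped when $\pi_n$ arrives would be popped in the same order by the next (larger) entry or by exhaustion, after which the two runs resynchronize. Drop the opening claim $s(\sigma\setminus x)=s(\sigma)\setminus x$ and the case analysis built on it entirely. Your example $\sigma=231$ refutes that claim ($132$ is not a counterexample), and only the restricted situation where $x$ is followed by larger entries is needed.
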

\begin{proof}
We induct on $k$. For $k=0$, the claim trivially holds. Now assume that the claim is true for $k-1\ge0$. Then, we consider the effect of the addition of $\pi_n$ to $s^{k-1}(\pi[:n])$ at position $\ind_{s^{k-1}(\pi)}(\pi_n)$ when $s$ is applied. Recall that by \Cref{lem:increasingtail} all elements of $s^{k-1}(\pi)$ to the right of $\pi_n$ are greater than $\pi_n$. Therefore, whenever an element is popped out of the stack by $\pi_n$ while applying $s$ to $s^{k-1}(\pi)$, it would also be popped out by the immediately following element or by exhaustion of the input permutation while applying $s$ to $s^{k-1}(\pi[:n])$. Thus, we conclude that the ordering of $s^{k-1}({\pi[:n]})$ upon applying $s$ is never disrupted by adding $\pi_n$, and $s^k(\pi)$ always includes the subpermutation $s^k(\pi[:n])$.
\end{proof}
We also cite Lemma 2.1 from Defant\cite{defant2}, and we include a proof for completeness.
\begin{lemma}\label{lem:gelastk}
For all $\pi\in S_n$, $m\in [n]$, and $k\ge 0$,
\[s^{k}(\pi_{\ge m})=s^k(\pi)_{\ge m}.\]
\end{lemma}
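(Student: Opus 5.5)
By induction on $k$, it suffices to prove the case $k=1$, namely $s(\sigma_{\ge m})=s(\sigma)_{\ge m}$ for an arbitrary permutation $\sigma$ (not necessarily standardized). Indeed, once this holds for all $\sigma$, applying it with $\sigma=s^{k-1}(\pi)$ and using the induction hypothesis $s^{k-1}(\pi_{\ge m})=s^{k-1}(\pi)_{\ge m}$ gives
\[s^k(\pi_{\ge m})=s\bigl(s^{k-1}(\pi)_{\ge m}\bigr)=s\bigl(s^{k-1}(\pi)\bigr)_{\ge m}=s^k(\pi)_{\ge m}.\]
The base case $k=0$ is trivial.

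For the single-step claim, I would use the recursive description of $s$: if $\sigma=LnR$ with $n=\max(\sigma)$, then $s(\sigma)=s(L)\,s(R)\,n$. This follows directly from the stack procedure, since $n$ flushes the entire stack after $L$ is processed and is itself popped only at the very end. I would then prove $s(\sigma_{\ge m})=s(\sigma)_{\ge m}$ by strong induction on $|\sigma|$. The empty case is trivial. Otherwise write $\sigma=LnR$. If $n<m$, then $\sigma_{\ge m}$ is empty and so is $s(\sigma)_{\ge m}$. If $n\ge m$, then $\sigma_{\ge m}=L_{\ge m}\,n\,R_{\ge m}$, and $n$ is still the maximum of this word. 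Hence
\[s(\sigma_{\ge m})=s(L_{\ge m})\,s(R_{\ge m})\,n=s(L)_{\ge m}\,s(R)_{\ge m}\,n=\bigl(s(L)\,s(R)\,n\bigr)_{\ge m}=s(\sigma)_{\ge m},\]
where the middle equality uses the induction hypothesis on the shorter words $L$ and $R$.

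The only point needing care is justifying the decomposition $s(LnR)=s(L)s(R)n$. While $L$ is being processed, nothing below the entries of $L$ in the stack interferes, so the output produced during $L$ is exactly $s(L)$ minus its final flush. When $n$ arrives, it pops everything remaining, completing $s(L)$. Then $n$ sits at the bottom of the stack and acts as an empty-stack boundary for $R$, because every entry of $R$ is smaller than $n$. So $R$ produces $s(R)$, and $n$ is popped last. This argument works for arbitrary distinct values, which is what the inductive step requires. I expect this verification to be the only mildly subtle step; everything else is formal.
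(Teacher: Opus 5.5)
Your proposal is correct and matches the paper's proof: both establish the one-step identity $s(\sigma_{\ge m})=s(\sigma)_{\ge m}$ by strong induction on length using the decomposition $s(LnR)=s(L)\,s(R)\,n$, and then obtain the general case by induction on $k$. Your version is slightly more careful than the paper's, since you work with arbitrary words of distinct values, handle the case $\max(\sigma)<m$, and justify the decomposition from the stack procedure instead of citing it.
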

\begin{proof}
We first use strong induction on $n$ to prove that $s(\pi_{\ge m}) = s(\pi)_{\ge m}$. For $n=1$, the claim is trivial. Now assuming that the claim holds for all $n\le \ell - 1$ such that $\ell\ge 2$, we prove that the claim holds for $\pi\in S_\ell$. If we express $\pi$ as $LnR$, then $s(\pi)=s(L)s(R)n$ by the recursive definition of the stack-sorting map. Similarly, if we express $\pi_{\ge m}$ as $L_{\ge m}nR_{\ge m}$, then $s(\pi_{\ge m})=s(L_{\ge m})s(R_{\ge m})n.$ Notably, $L_{\ge m}$ and $R_{\ge m}$ are subpermutations of $L$ and $R$ respectively, so under the induction hypothesis, $s(L_{\ge m})$ and $s(R_{\ge m})$ occur as subpermutations of $s(L)$ and $s(R)$ respectively as $|L|, |R|\le \ell-1$. Therefore, we conclude that $s(\pi_{\ge m})=s(L_{\ge m})s(R_{\ge m})n$ occurs as a subpermutation of $s(\pi)=s(L)s(R)n$, and $s(\pi_{\ge m}) = s(\pi)_{\ge m}$.

Next, we induct on $k$ to prove the general claim. For $k=0$, the claim is trivial. Suppose that the claim holds for $k-1$. Then, note that $s^{k-1}(\pi_{\ge m}) = s^{k-1}(\pi)_{\ge m}$, and \[s^k(\pi_{\ge m})=s(s^{k-1}(\pi_{\ge m})) = s(s^{k-1}(\pi)_{\ge m}) = s^k(\pi)_{\ge m},\]
which completes the inductive proof.
\end{proof}
\begin{lemma}\label{lem:reduction}
For all $\pi\in S_n$, it holds that $\met{\pi}=\max(\met{\pi[:n]},\met{\pi_{\ge \pi_n}})$.
\end{lemma}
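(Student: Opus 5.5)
We prove the two inequalities separately. Throughout we use that the sortedness of $s^k$ can be tested on subpermutations: if $\sigma$ occurs as a subpermutation of $s^k(\pi)$ and $s^k(\pi)$ is increasing, then $\sigma$ is increasing.

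\emph{Lower bound.} Let $k=\met{\pi}$, so $s^k(\pi)$ is increasing. By \Cref{lem:excludinglast}, $s^k(\pi[:n])$ occurs as a subpermutation of $s^k(\pi)$. Hence it is increasing, and $\met{\pi[:n]}\le k$. By \Cref{lem:gelastk} with $m=\pi_n$, we have $s^k(\pi_{\ge\pi_n})=s^k(\pi)_{\ge \pi_n}$. This is a subpermutation of an increasing permutation, so it is increasing, and $\met{\pi_{\ge\pi_n}}\le k$. Therefore $\max(\met{\pi[:n]},\met{\pi_{\ge\pi_n}})\le\met{\pi}$.

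\emph{Upper bound.} Let $k=\max(\met{\pi[:n]},\met{\pi_{\ge\pi_n}})$. We show $s^k(\pi)$ is increasing by checking every pair $a<b$ of entries and showing $a$ precedes $b$ in $s^k(\pi)$.
\begin{itemize}
\item If neither entry is $\pi_n$, both lie in $\pi[:n]$. By \Cref{lem:excludinglast}, their relative order in $s^k(\pi)$ is their order in $s^k(\pi[:n])$, which is increasing since $\met{\pi[:n]}\le k$ and sortedness persists under further applications of $s$.
\item If $a=\pi_n$, then $b>\pi_n$, and both lie in $s^k(\pi)_{\ge\pi_n}=s^k(\pi_{\ge\pi_n})$, which is increasing by \Cref{lem:gelastk}. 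So $a$ precedes $b$.
\item If $b=\pi_n$ and $a<\pi_n$, we must show $a$ lies left of $\pi_n$. By \Cref{lem:increasingtail}, every entry to the right of $\pi_n$ in $s^k(\pi)$ exceeds $\pi_n$, so $a$ cannot lie to its right.
\end{itemize}
Since every pair appears in increasing order, $s^k(\pi)$ is increasing, giving $\met{\pi}\le k$.

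The main obstacle is the case $b=\pi_n$, $a<\pi_n$, because neither reduced permutation contains $a$ together with $\pi_n$. \Cref{lem:increasingtail} handles this case directly. We also rely on the fact that once $s^j(\sigma)$ is increasing, all later iterates are increasing, since $s$ fixes increasing permutations; this is what lets us use the single exponent $k$ for both reduced permutations.
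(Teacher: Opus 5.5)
Your proof is correct and follows the same route as the paper. The lower bound uses \Cref{lem:excludinglast} and \Cref{lem:gelastk} to show that both reduced iterates are subpermutations of $s^k(\pi)$, and the upper bound adds \Cref{lem:increasingtail} to place $\pi_n$ to the right of every smaller entry. Your pairwise check at the single exponent $k=\max(\met{\pi[:n]},\met{\pi_{\ge\pi_n}})$ is a cleaner version of the paper's terser argument: it makes explicit that both reduced permutations must be sorted at the same step.
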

\begin{proof}
By \Cref{lem:excludinglast} and \Cref{lem:gelastk}, it is evident that \[\met{\pi}\ge\max(\met{\pi[:n]},\met{\pi_{\ge \pi_n}})\] because $s^k(\pi[:n])$ and $s^k(\pi_{\ge \pi_n})$ are both subpermutations of $s^k(\pi)$ for all $k\ge 0$. If $\met{\pi} > \met{\pi[:n]}$, then $\pi_n$ must be the only element out of order in $s^{\met{\pi[:n]}}(\pi)$. Note that $\pi_n$ is always to the right of elements less than $\pi_n$, so when the subpermutation $s^{\met{\pi_{\ge\pi_n}}}(\pi_{\ge \pi_n})$ is sorted, $s^{\met{\pi_{\ge\pi_n}}}(\pi)$ must then also be sorted.
\end{proof}
\begin{proof}[Proof of \Cref{thm:extensiontoSn}]
We induct on $n$. For $n=1$, the claim is trivial because $\pi[:2]_{\ge \pi_1}=\pi$. Now assume that the claim holds for $n-1\ge 1$. By \Cref{lem:reduction},
\begin{align*}
\met{\pi}&=\max(\met{\pi[:n]},\met{\pi_{\ge \pi_n}}) \\
&=\max\left(\max_{i\in [n-1]}(\met{\pi[:n][:i+1]_{\ge \pi[:n]_i}}),\met{\pi_{\ge \pi_n}}\right) \\
&= \max_{i\in [n]}(\met{\pi[:i+1]_{\ge \pi_i}}).
\end{align*}
\end{proof}
\section{Future Directions}
We conclude by providing some insight into how one might generalize our approach to $S_n$ or other special subsets of $S_n$. Note that \Cref{thm:extensiontoSn} implies
\[\met{\pi}=\max_{i\in [n]}\left(w\left(\alpha_{\std\left(\pi[:i+1]_{\ge\pi_i}\right)}\right)\right).\]
Thus, exploring the evolution of the sequence $\left(T_{\std\left(\pi[:i+1]_{\ge\pi_i}\right)}\right)_{i=1}^n$ may be very promising for enumerating $W_t(n)$. However, there is no natural generalization of stack-sorting diagrams or tableaux for $S_n$, which prohibits the same dynamic programming argument from succeeding. $S_n'$ admits a polynomial time enumeration of its $t$-stack-sortable elements with our method precisely due to the fact that $s^t(\pi) \in S_n'$ is sorted if and only if $s^t(\pi_{\ge {\pi_n}})$ is sorted, so it limits our exploration to only considering elements sent to the right of $0$ after every application of $s$. Incidentally, if we let $S_n^2:=\{\pi\in S_{n+1}\mid \pi_{n+1}=2\}$, then the same property would hold for $S_n^2$ because $2$ is always to the right of $1$ in $s^t(\pi)$ for all $\pi\in S_n^2$ and $t\ge 0$. Therefore, $\met{\pi}=\met{\std(\pi_{\ge2})}$ for all $\pi\in S_n^2$, and the number of $t$-stack-sortable permutations in $S_n^2$ is exactly $|\mathcal{W}'_t(n-1)|$, which is computable in polynomial time.

Additionally, in \Cref{app}, we observe that $|\mathcal{W}'_2(n)|$ corresponds to the Motzkin numbers (OEIS A001006 \cite{oeis_A001006}), so we accordingly make \Cref{conj}. There is yet to be a closed form or simple recursive formulation of $|\mathcal{W}'_t(n)|$ for $t\ge 3$.
 
\begin{conjecture}\label{conj}
For all $n\in \mathbb{N}$, the value $|W_2'(n)|$ is the $n$th Motzkin number.
\end{conjecture}
Another future direction worth exploring is improving the time complexity bound for computing $|W_t'(n)|$. With prefix sums, we can show that $|W_t'(n)|$ is computable in $O(n^{2t+1})$ time, though we choose to exclude the proof because achieving polynomial time complexity is fundamentally the purpose of this paper.
\section{Appendix} \label{app}
We provide the computed values of $|\mathcal{W}'_t(n)|$ for $t\in\{2,3,4\}$ and $n\in[30]$.
\begin{longtable}{rr|rr}
\toprule
$n$ & $|\mathcal{W}'_2(n)|$ & $n$ & $|\mathcal{W}'_2(n)|$ \\ \midrule
1 & 1 & 16 & 853,467 \\
2 & 2 & 17 & 2,356,779 \\
3 & 4 & 18 & 6,536,382 \\
4 & 9 & 19 & 18,199,284 \\
5 & 21 & 20 & 50,852,019 \\
6 & 51 & 21 & 142,547,559 \\
7 & 127 & 22 & 400,763,223 \\
8 & 323 & 23 & 1,129,760,415 \\
9 & 835 & 24 & 3,192,727,797 \\
10 & 2,188 & 25 & 9,043,402,501 \\
11 & 5,798 & 26 & 25,669,818,476 \\
12 & 15,511 & 27 & 73,007,772,802 \\
13 & 41,835 & 28 & 208,023,278,209 \\
14 & 113,634 & 29 & 593,742,784,829 \\
15 & 310,572 & 30 & 1,697,385,471,211 \\ \bottomrule
\end{longtable}
\begin{longtable}{rr|rr}
\toprule
$n$ & $|\mathcal{W}'_3(n)|$ & $n$ & $|\mathcal{W}'_3(n)|$ \\ \midrule
1 & 1 & 16 & 460,125,335 \\
2 & 2 & 17 & 2,165,580,695 \\
3 & 6 & 18 & 10,291,948,854 \\
4 & 18 & 19 & 49,345,393,406 \\
5 & 60 & 20 & 238,493,417,444 \\
6 & 218 & 21 & 1,161,146,210,522 \\
7 & 826 & 22 & 5,691,351,451,536 \\
8 & 3,261 & 23 & 28,069,230,225,236 \\
9 & 13,337 & 24 & 139,228,254,682,547 \\
10 & 56,056 & 25 & 694,262,710,142,607 \\
11 & 241,206 & 26 & 3,479,021,348,150,096 \\
12 & 1,059,255 & 27 & 17,513,828,374,589,112 \\
13 & 4,733,887 & 28 & 88,545,050,076,393,080 \\
14 & 21,483,097 & 29 & 449,456,637,011,361,626 \\
15 & 98,825,193 & 30 & 2,290,043,872,282,754,031 \\ \bottomrule
\end{longtable}
\begin{longtable}{rr|rr}
\toprule
$n$ & $|\mathcal{W}'_4(n)|$ & $n$ & $|\mathcal{W}'_4(n)|$ \\ \midrule
1 & 1 & 16 & 15,921,264,079 \\
2 & 2 & 17 & 104,757,767,491 \\
3 & 6 & 18 & 699,855,355,916 \\
4 & 24 & 19 & 4,740,919,917,872 \\
5 & 96 & 20 & 32,526,908,642,094 \\
6 & 420 & 21 & 225,785,182,393,596 \\
7 & 2,004 & 22 & 1,584,240,656,499,096 \\
8 & 10,248 & 23 & 11,227,118,610,129,040 \\
9 & 54,558 & 24 & 80,301,687,416,204,615 \\
10 & 301,964 & 25 & 579,308,930,683,932,451 \\
11 & 1,732,408 & 26 & 4,212,822,807,107,915,984 \\
12 & 10,256,360 & 27 & 30,866,455,025,733,336,786 \\
13 & 62,322,928 & 28 & 227,743,434,563,963,874,771 \\
14 & 387,557,130 & 29 & 1,691,470,337,203,992,553,815 \\
15 & 2,460,804,208 & 30 & 12,640,748,036,006,674,578,379 \\ \bottomrule
\end{longtable}
\section*{Conflict of Interest Statement}

On behalf of all authors, the corresponding author states that there is no conflict of interest.

\section*{Data Availability Statement}

We do not analyze any datasets.
\bibliographystyle{plain}
\bibliography{bib_stacksortingdiagrams}
\end{document}